\newcommand*{\Scale}[2][4]{\scalebox{#1}{$#2$}}%
\tikzstyle{place}=[circle,draw=blue!50,fill=blue!20,thick]
\tikzstyle{place1}=[circle,draw=blue!2,fill=blue!20,thick]
\tikzstyle{placetab}=[circle,draw=blue!50,fill=blue!20,thick, inner sep=0pt,minimum size=2mm]
\newcommand{\be}{\begin{equation}}
\newcommand{\ee}{\end{equation}}
\newcommand{\bea}{\begin{eqnarray}}
\newcommand{\eea}{\end{eqnarray}}
\newcommand{\nod}{\noindent}
\newcommand{\ba}{\begin{array}}
\newcommand{\ea}{\end{array}}
\newcommand{\bc}{\begin{center}}
\newcommand{\ec}{\end{center}}
\newtheorem{theo}{Theorem}
\newtheorem{defi}{Definition}
\newtheorem{rem}{Remark}
\newcommand{\beq}{\begin{equation}}
\newcommand{\eeq}{\end{equation}}
\newcommand{\la}{\langle}
\newcommand{\ra}{\rangle}
\begin{document}
\title{\bf Exact deterministic representation of Markovian $SIR$ epidemics on networks with and without loops\\}

\author{Istvan Z. Kiss$^{1,\ast}$, Charles G. Morris$^{1}$, Fanni S\'elley$^{2}$,\\
P\'eter L. Simon$^{2}$ \& Robert R. Wilkinson$^{3}$}

\maketitle

\begin{center}

$^1$ School of Mathematical and Physical Sciences, Department of
Mathematics, University of Sussex, Falmer,
Brighton BN1 9QH, UK\\
$^2$ Institute of Mathematics, E\"otv\"os Lor\'{a}nd University
Budapest, and Numerical Analysis and Large Networks Research Group, Hungarian Academy of Sciences, Hungary\\
$^3$ Department of Mathematical Sciences, The University of Liverpool, Peach Street, Liverpool L69 7ZL, UK

\end{center}

\vspace{8cm}
\begin{flushleft}
$\ast$ corresponding author\\
email: i.z.kiss@sussex.ac.uk\\
\end{flushleft}

\newpage
\begin{abstract}
In a previous paper Sharkey et al. \cite{Sharkeyetal13Exact} proved the exactness of closures at the level of triples for Markovian $SIR$ (susceptible-infected-removed) dynamics on tree-like networks. This resulted in a deterministic representation of  the epidemic dynamics on the network that can be numerically evaluated. In this paper, we extend this modelling framework to certain classes of networks exhibiting loops. We show  that closures where the loops are kept intact are exact, and lead to a simplified and numerically solvable system of ODEs (ordinary-differential-equations). The findings of the paper lead us to a generalisation of closures that are based on partitioning the network around nodes that are cut-vertices (i.e. the removal of such a node leads to the network breaking down into at least two disjointed components or subnetworks). Exploiting this structural property of the network yields some natural closures, where the evolution of a particular state can typically be exactly given in terms of the corresponding or projected sates on the subnetworks
and the cut-vertex. A byproduct of this analysis is an alternative probabilistic proof of the exactness of the closures for tree-like networks presented in Sharkey et al. \cite{Sharkeyetal13Exact}.  In this paper we also elaborate on how the main result can be applied to more realistic networks, for which we write down the ODEs explicitly and compare output from these to results from simulation. Furthermore, we give a general, recipe-like method of how to apply the reduction by closures technique for arbitrary networks, and give an upper bound on the maximum number of equations needed for an exact representation.

\end{abstract}

\nod {\bf Keywords:} master equation, network, closure, loop, cut-vertex

\newpage

\section{Introduction}
Despite tremendous progress over the past decade or so, modelling transmission processes on networks still poses many challenges.
A significant number of such models are concerned with modelling epidemics on networks, in particular $SIR$ dynamics which makes the 
treatment of some models easier due to the linear transmission process, as opposed to $SIS$ dynamics where nodes can become reinfected multiple times.
There is a wealth of modelling approaches to this problem \cite{LeonReview, HouseUnifyingApp, KarrerNewman2010Exact, TaylorKissJMB} that differ in the choice of variables at which models are formulated, and whether averages are taken at the population level, or a probabilistic view is kept whereby either the full state space is considered \cite{SimonKissAut}, or where, modelling starts at node level \cite{Sharkey08, Sharkey11, Sharkeyetal13Exact}.

A major further challenge is posed by extending existing results for loopless networks to networks with loops or clustered networks, where clustering and the presence of loops is closely related (i.e. the presence of many closed loops of size three leads to high levels of clustering). The specific issues cluster around the generation of synthetic networks with tuneable clustering \cite{GreenKissLargeScaleClust, Newman2003ClustNetwGeneration, VolzTuneDegreeClust}, as well as the development of low-dimensional approximate or exact models with the aim to match output directly from the stochastic process. Some progress in both areas has been made, with final epidemic size calculations (non-time-dependent measures) giving excellent agreement for specific clustered networks \cite{Ball, GleesonClusteredNetworks, Newman2009RanNetwClust}. There are also examples for good time-evolution models \cite{HouseUnifyingApp, VolzPlosOne}, but many important and difficult questions remain. 

Using the approach introduced in \cite{Sharkey08,Sharkey11,Sharkeyetal13Exact}, we present exact, deterministic representations of Markovian $SIR$ epidemics on networks with and without loops and identify the link between the structural properties of the networks and the viability of closures that allow us to
write down exact systems that can be numerically evaluated. Here, the equations start at the level of nodes and consider the exact probability of nodes being
susceptible, infected or recovered at a given time, see \cite{Sharkey08, Sharkey11, Sharkeyetal13Exact}. In particular, we show the link between nodes that are cut-vertices and edges that are bridges (both defined later), and the feasibility of closures. Assuming a network with $N$ nodes with the weighted connectivity and transmissibility rate matrix given by 
$T=(T_{ij})_{i,j=1, 2, \dots, N}$, where $T_{ii}=0 \, \forall i=1,2, \dots, N$, the evolution equations for singles and pairs are given by,
\begin{eqnarray}\nonumber
\dot{\la S_i\ra}&=&-\sum_{j=1}^N T_{ij}\la S_iI_j\ra, \\ \nonumber
\dot{\la I_i\ra}&=&\sum_{j=1}^N
T_{ij}\la S_iI_j\ra -\gamma_i\la I_i\ra,\\ \nonumber
\dot{\la S_iI_j\ra}&=&\sum_{k=1,k\neq i}^N T_{jk}\la S_iS_jI_k\ra-\sum_{k=1, k\neq j}^N T_{ik}\la I_kS_iI_j\ra \\  \nonumber
&& -T_{ij}\la S_iI_j\ra-\gamma_j\la S_iI_j\ra, \\
\dot{\la S_iS_j\ra}&=&-\sum_{k=1, k\neq j}^NT_{ik}\la I_kS_iS_j\ra-\sum_{k=1, k\neq i}^NT_{jk}\la
S_iS_jI_k\ra, \label{SharkeyOriginal}
\end{eqnarray}
where $\la A_i\ra$ denotes the time-dependent probability for individual $i$ being
in state $A$, and expressions of the form $\la A_iB_j\ra$ denote the
time-dependent probability that individuals $i$ and $j$ are in states $A$ and $B$, respectively.
We assume that all processes, i.e. infection and recovery, are independent Poisson processes, with per-link infection rate denoted by $\tau$ and absorbed in $T$ and rates of 
recovery $\gamma_j$ ($j=1,2, \dots, N$). While this is a general model formulation from the network view point, in this paper all numerical simulations are carried out using undirected and unweighted networks with the per-contact transmission rate specified explicitly, and with the same recovery rate for all nodes.

The system above is not closed as equations for the triples are needed. In Sharkey et al. \cite{Sharkeyetal13Exact}, the authors have proved that for tree-like networks and for some 
special cases of non-tree-like networks the following closures hold and are exact
\beq
\la S_j\ra\la S_iS_jI_k\ra = \la S_iS_j\ra\la S_jI_k\ra, \label{closure1}
\eeq
for all $i, j, k \in\{1,2,\dots,N\}$, and for all $j$ with links towards $i$ and all $k$ with links towards $j$ and $i\neq k$ (i.e. $T_{ij} \neq 0$ and $T_{jk} \neq 0$), and
\beq
\la S_i\ra\la I_kS_iI_j\ra = \la I_kS_i\ra\la S_i I_j\ra,\label{closure2}
\eeq
for all $i, j, k \in\{1,2, \dots ,N\}$ and for all $k$ and $j$ with links towards $i$ (i.e. $T_{ik} \neq 0$ and $T_{ij} \neq 0$), and $j \neq k$.
Closures are exact in the sense that the closed system given below,
\begin{eqnarray}\nonumber
\dot{\la X_i\ra}&=&-\sum_{j=1}^N T_{ij}\la X_iY_j\ra, \\ \nonumber 
\dot{\la Y_i\ra}&=&\sum_{j=1}^N
T_{ij}\la X_iY_j\ra -\gamma_i\la Y_i\ra,\\ \nonumber
\dot{\la X_iY_j\ra}&=&\sum_{k=1,k\neq i}^N T_{jk}\frac{\la X_iX_j\ra\la X_jY_k\ra}{\la X_j\ra}-\sum_{k=1,k\neq j}^N T_{ik}\frac{\la X_iY_k\ra\la X_iY_j\ra}{\la X_i\ra}    \\ \nonumber
&&-T_{ij}\la X_iY_j\ra-\gamma_j\la X_iY_j\ra, \\
\dot{\la X_iX_j\ra}&=&-\sum_{k=1,k\neq j}^NT_{ik}\frac{\la Y_kX_i\ra\la X_iX_j\ra}{\la
X_i\ra}-\sum_{k=1,k\neq i}^NT_{jk}\frac{\la X_iX_j\ra\la X_jY_k\ra}{\la X_j\ra}, \label{0.2}
\label{0.31}
\end{eqnarray}
is such that when $\la X_i\ra=\la S_i\ra$, $\la Y_i\ra=\la I_i\ra$ hold at $t=0$, then these will hold for $\forall t >0$. Similar
equalities hold for the pairs.  To emphasise that this second system is an `approximation', we use $X$ for susceptible and $Y$ for infected.
The main result of the original paper \cite{Sharkeyetal13Exact}, on which we now build, is the proof that for a tree-like network, the closure is exact in the sense that solving the closed 
system we get the same values for the probabilities.

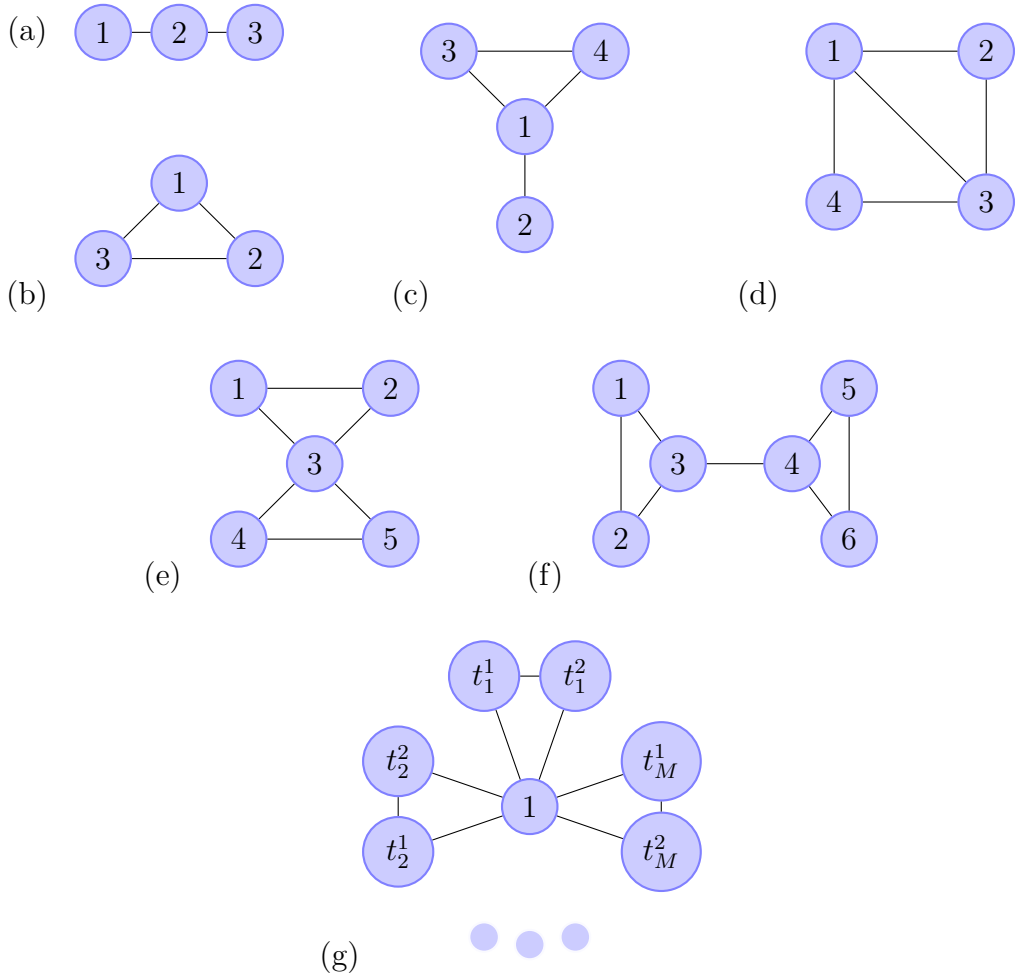
\begin{figure}
 	\begin{center}
		\begin{tikzpicture}
			\node[place] (left) at ( -1,1) {$1$};
			\node[place] (middle) at (0,1) {$2$};
			\node[place] (right) at (1,1) {$3$};
			\node (a) at (-2,1) {(a)};
			\node[place] (leftbase) at (-1,-2) {$3$};
			\node[place] (rightbase) at (1,-2) {$2$};
			\node[place] (middletop) at (0,-1) {$1$};
			\node (a) at (-2,-2.5) {(b)};			
			\draw [-] (left) -- (middle);
			\draw [-] (right) -- (middle);
			\draw [-] (leftbase) -- (rightbase);
			\draw [-] (rightbase) -- (middletop);			
			\draw [-] (leftbase) -- (middletop);					
		\end{tikzpicture}
		\hspace{1.0cm}
		\begin{tikzpicture}
			\node[place] (lefttop) at ( -1,1) {$3$};
			\node[place] (middle) at ( 0,0) {$1$};
			\node[place] (righttop) at ( 1,1) {$4$};
			\node[place] (bottom) at (0,-1.3) {$2$};
			\node (a) at (-1.5,-2.25) {(c)};
			\draw [-] (lefttop) -- (middle);
			\draw [-] (righttop) -- (middle);
			\draw [-] (lefttop) -- (righttop);
			\draw [-] (middle) -- (bottom);			
		\end{tikzpicture}
		\hspace{1.0cm}
			\begin{tikzpicture}
			\node[place] (lefttop) at ( -1,1) {$1$};
			\node[place] (righttop) at ( 1,1) {$2$};
			\node[place] (leftbottom) at ( -1,-1) {$4$};
			\node[place] (rightbottom) at (1,-1) {$3$};
			\node (a) at (-2.0,-2.25) {(d)};
			\draw [-] (lefttop) -- (righttop);
			\draw [-] (righttop) -- (rightbottom);
			\draw [-] (lefttop) -- (leftbottom);
			\draw [-] (leftbottom) -- (rightbottom);
			\draw [-] (lefttop) -- (rightbottom);			
		\end{tikzpicture}
	\end{center}
	\begin{center}	
		\begin{tikzpicture}
			\node[place] (lefttop) at ( -1,1) {$1$};
			\node[place] (middle) at ( 0,0) {$3$};
			\node[place] (righttop) at ( 1,1) {$2$};
			\node[place] (leftbottom) at (-1,-1) {$4$};
			\node[place] (rightbottom) at (1,-1) {$5$};
			\node (a) at (-2.0,-1.5) {(e)};
			\draw [-] (lefttop) -- (middle);
			\draw [-] (righttop) -- (middle);
			\draw [-] (lefttop) -- (righttop);
			\draw [-] (middle) -- (leftbottom);	
			\draw [-] (middle) -- (rightbottom);
			\draw [-] (rightbottom) -- (leftbottom);					
		\end{tikzpicture}
		\hspace{1.0cm}
			\begin{tikzpicture}
			\node[place] (lefttop) at ( -1.5,1) {$1$};
			\node[place] (leftbottom) at (-1.5,-1) {$2$};
			\node[place] (leftmiddle) at ( -0.75,0) {$3$};
			\node[place] (righttop) at ( 1.5,1) {$5$};
			\node[place] (rightbottom) at (1.5,-1) {$6$};
			\node[place] (rightmiddle) at (0.75,0) {$4$};
			\node (a) at (-2.5,-1.5) {(f)};
			\draw [-] (lefttop) -- (leftbottom);
			\draw [-] (lefttop) -- (leftmiddle);
			\draw [-] (leftbottom) -- (leftmiddle);
			\draw [-] (rightmiddle) -- (leftmiddle);
			\draw [-] (righttop) -- (rightbottom);
			\draw [-] (righttop) -- (rightmiddle);
			\draw [-] (rightbottom) -- (rightmiddle);
			\end{tikzpicture}
		\end{center}			
		\begin{center}
			\begin{tikzpicture}
			\node[place] (lefttop) at ( -0.6,1.73) {$t_1^1$};
			\node[place] (middle) at ( 0,0) {$1$};
			\node[place] (righttop) at ( 0.6,1.73) {$t_1^2$};
			\node[place] (leftlefttop) at (-1.73,0.6) {$t_2^2$};
			\node[place] (leftleftbottom) at (-1.73,-0.6) {$t_2^1$};
			\node[place] (rightrighttop) at (1.73,0.6) {$t_M^1$};
			\node[place] (rightrightbottom) at (1.73,-0.6) {$t_M^2$};
			\node[place1] (dot1) at (-0.6,-1.73) {$$};
			\node[place1] (dot2) at (0,-1.83) {$$};
			\node[place1] (dot3) at (0.6,-1.73) {$$};
			\node (a) at (-2.5,-2.0) {(g)};
			\draw [-] (lefttop) -- (middle);
			\draw [-] (righttop) -- (middle);
			\draw [-] (lefttop) -- (righttop);
			\draw [-] (leftlefttop) -- (middle);
			\draw [-] (leftleftbottom) -- (middle);
			\draw [-] (leftlefttop) -- (leftleftbottom);
			\draw [-] (rightrighttop) -- (middle);
			\draw [-] (rightrightbottom) -- (middle);
			\draw [-] (rightrighttop) -- (rightrightbottom);
		\end{tikzpicture}
	\end{center}
	\caption{Open triangle or line network of three nodes (a), closed triangle loop (b), lollipop (c), toast (d), bow tie (e), bow tie with a bridge (f), and star-triangle (i.e. a star of 
	triangles) (g) network with $M$ triangles.}\label{lollipop-bowtie}
\end{figure}			

Regarding the original exact equations (Eq.~(\ref{SharkeyOriginal})) we make the following remarks:
\begin{rem}
\noindent \begin{enumerate}
\item The equations emerge naturally starting from nodes and building up to higher moments.
The equations do not cover every possible configuration of states across connected subnetworks. For example, the exact equations
do not require knowledge of pairs such as $\la I_{i}I_{j}\ra$ as these are not required by the system dynamics.
\item All the triples that appear are such that the middle node is susceptible. In a tree-like network this effectively means that the nodes to the `left' and to the `right' of the middle node 
have not yet communicated since the only way is through the middle susceptible node, and thus, technically their states are independent.
\item The closures will require extra variables that have previously not been needed, for example, evolution equations for $(SS)$ pairs (e.g. $\la S_{i}S_{j}\ra$) are needed. This
is required by the closure of an $(SSI)$ triple (e.g. $\la S_iS_jI_k\ra$).
\item Additional closures above and beyond what the equations require and emerge naturally may not hold, but this is not relevant as these are not needed to
derive a closed, exact system with fewer equations.
\end{enumerate}
\end{rem}

\section{Background and examples}
An $SIR$ epidemic on an arbitrary directed and weighted network can be represented as a set of equations starting at the level of individuals and by building up to and accounting for
the dependencies of these on pairs, and of the pairs on triples, and so on, until full system size is reached. At this point the system of equations will become
well-defined and self-consistent. The aim in this type of modelling approach is to find closures, where higher order moments can be approximated or specified exactly in terms of
combinations of lower order moments. It is now well-known and accepted that this is feasible for tree-like networks and for networks with loops, but starting from some very specific 
initial conditions or particular example networks \cite{KarrerNewman2010Exact, Sharkeyetal13Exact}. Here, we will show a more general approach to extend the ideas of closures to networks with loops/cycles and we will also consider specific fully worked-out examples, as well as how this approach could be generalised to and its feasibility for a larger class of networks.\\

\begin{figure}
\begin{center}
\epsfig{file=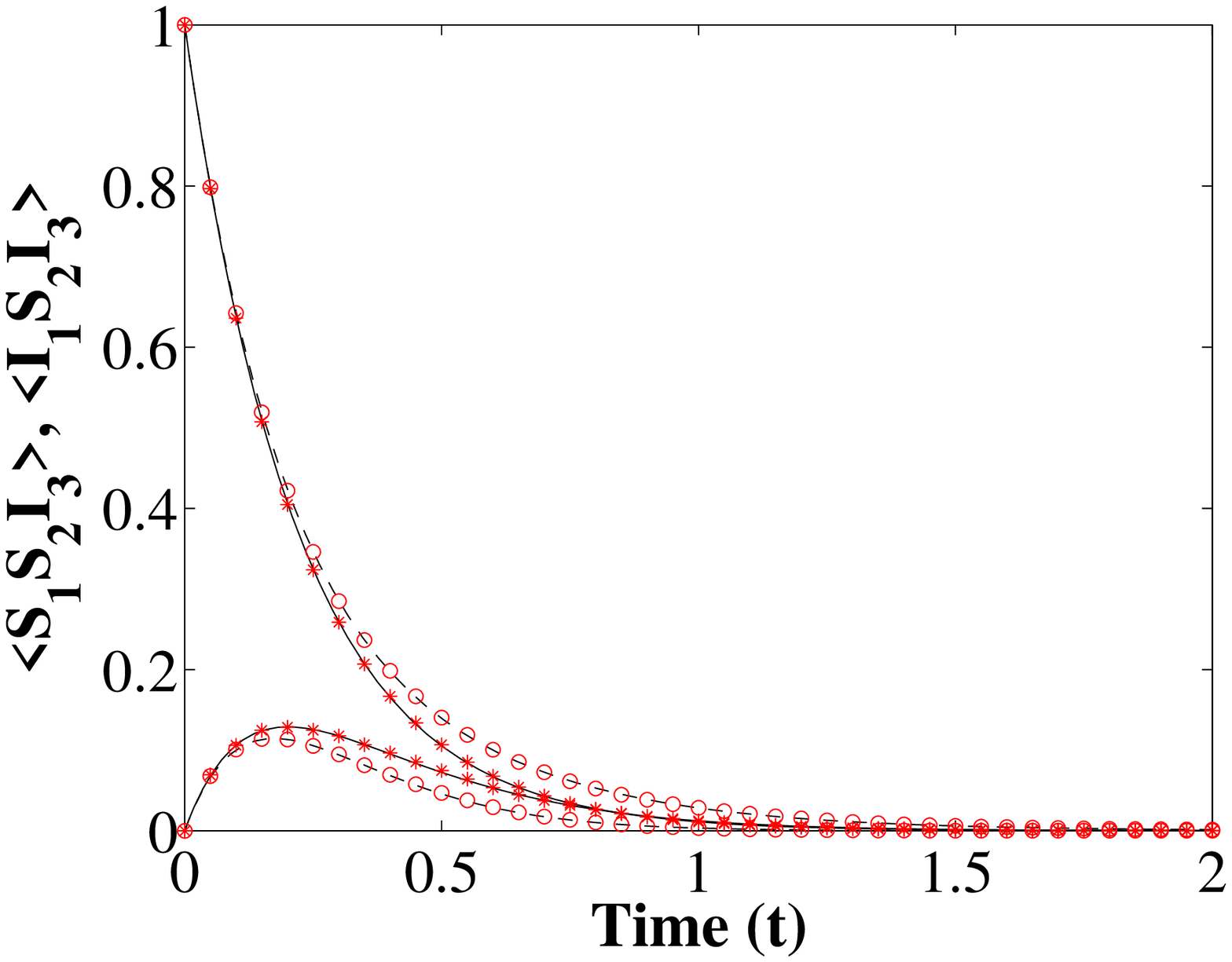,width=10cm}
\end{center}
\caption{Testing of two closures on the fully connected triangle with 3 nodes (see Fig.~\ref{lollipop-bowtie}b), ($\la S_1S_2I_3\ra=\frac{\la S_1S_2 \ra \la S_2I_3\ra}{\la S_2 \ra}$ and $\la I_1S_2I_3\ra=\frac{\la I_1S_2 \ra \la S_2I_3\ra}{\la S_2 \ra}$), by plotting  the left and right hand sides of the closures from the full system. Continuous lines represent the left hand sides, and dashed lines represent the right hand sides for the ($SSI$) and ($ISI$) triples. Results from the average of $10^5$ Gillespie-type simulations are plotted by ($\star$) and ($\circ$). System started such that  $\la S_1S_2I_3\ra(0)=1$, and hence, $\la I_1S_2I_3\ra(0)=0$. Parameter values for all cases are $\tau=7/4$ and $\gamma=1$.}
\label{BadCloTri}
\end{figure}

\noindent \textbf{Testing closures: examples where closures do and do not work\\}
Before we generalise the ideas of closures to networks with loops, let us consider the special cases of small networks with and without loops, namely the open and closed triple, lollipop, and toast networks presented in Fig.~\ref{lollipop-bowtie}. For such small networks, the full system of equations can be written down and closures can be tested. 
The case of an open triple, or a line network of three nodes, is discussed explicitly in \cite{Sharkeyetal13Exact}, where an analytical proof for the exactness of the closure is presented with calculations relying explicitly on the full-system of equations. The simplest and most obvious network with loops (or a loop) is the closed triangle, see Fig.~\ref{lollipop-bowtie}b. In Appendix \ref{TriangleEqs}, we list the full system of differential equations, and we use this to show that triple closures, such as those presented in Eqs.~(\ref{closure1}-
\ref{closure2}), do not hold for closed loops. In Fig.~\ref{BadCloTri} we show that the ($SSI$)- and ($ISI$)-type triples cannot be closed exactly. This is based on numerically evaluating 
the full system, and thus, being able to compare the closure via the exact time evolution of the closures' constituent parts. This simple analysis suggests that loops cannot be closed, and it is likely that, for exact closures loops need to be kept intact. 

Keeping the loops intact is a feasible approach and we illustrate this for the lollipop network (Fig.~\ref{lollipop-bowtie}c). We start by generating and writing down the full set of 
equations for the lollipop network, see Appendix \ref{LolEqs}. These include two sets of equations. First, the naturally emerging set of equations which can be broken down into those that can, Eqs.~(\ref{cannotbeclosed_first}-\ref{cannotbeclosed_last}), and cannot be closed, Eqs.~(\ref{canbeclosed_first}-\ref{canbeclosed_last}). These together give rise to the natural full system (NFS).  Second, the non-closable equations together with the extra variables required by the closures give rise to a reduced system (RS), see Eqs.~(\ref{cannotbeclosed_first}-\ref{cannotbeclosed_last}) plus Eqs.~(\ref{extra_by_closure_first}-\ref{extra_by_closure_last}). Figure~\ref{test_lollipop_closure} gives clear numerical evidence that closures at the full system level that keep the closed triangle complete are exact (see the left panel of Fig.~\ref{test_lollipop_closure}). Moreover, in the same figure (see right panel), we compare the prevalence resulting from the NFS and the RS, and this clearly illustrates that the proposed closures are likely to be exact. By looking at the full set of equations for the lollipop network one can notice that node 1 always appears as a susceptible node whenever appearing in a triple or quadruple which is a candidate for closure. This is an important structural property of the node within the network that will be elaborated later on. We also note that two main types of closures emerge: (a) closures at the level of triples which are not part of the closed triangle and (b) closures at the level of the full system in a way in which the quad is broken down into the fully connected triangle and the tail-edge of the lollipop.

Moving towards networks with more or multiple loops we consider the toast network (see Fig.~\ref{lollipop-bowtie}d), with full equations for this given in Appendix \ref{ToastEqs}. In Fig.~\ref{test_toast_closure}, we give examples based on the full system and simulation. The left panel of the figure clearly shows that the proposed closure does not hold. For the toast network the problem of the closure is more complex as it is possible to write down 
closures that hold, see the right panel of Fig.~\ref{test_toast_closure}. However, the reduction in the number of equations due to these closures is not significant. Hence, again it is clear that the equations cannot be closed, the loops and the whole network need to be kept intact.

Several tests can be performed to test the validity of the closures. First, the validity of the closures can be tested directly from the full system using the NFS. Second, the NFS and RS 
can be compared via the evolution of prevalence in time. We also note, that when the full system is available it is possible to give an analytic proof that closures hold. This involves 
rearranging the closure relation as a difference, for example as $\alpha(t)=\la \cdot \ra \la \cdot \ra - \la \cdot \ra \la \cdot \ra$. This is then followed by showing that $\dot{\alpha}(t)=0 \forall t \geq 0$, which coupled with the closure holding at time $t=0$ gives the desired result. The calculations will involve other closure-like or $\alpha$-like expressions, but it will be possible to show that all such closures are such that they satisfy a $\dot{\alpha}(t)=-C \alpha(t)$ equation, where $C>0$, see Sharkey et al. \cite{Sharkeyetal13Exact}. For larger systems (for example the bow tie in Fig.~\ref{lollipop-bowtie}e), the terms entering the  closures can be evaluated from direct stochastic simulations and compared as such, see simulation examples in Figs.~\ref{BadCloTri} and Fig.~\ref{test_toast_closure}. 

\begin{figure}
\epsfig{file=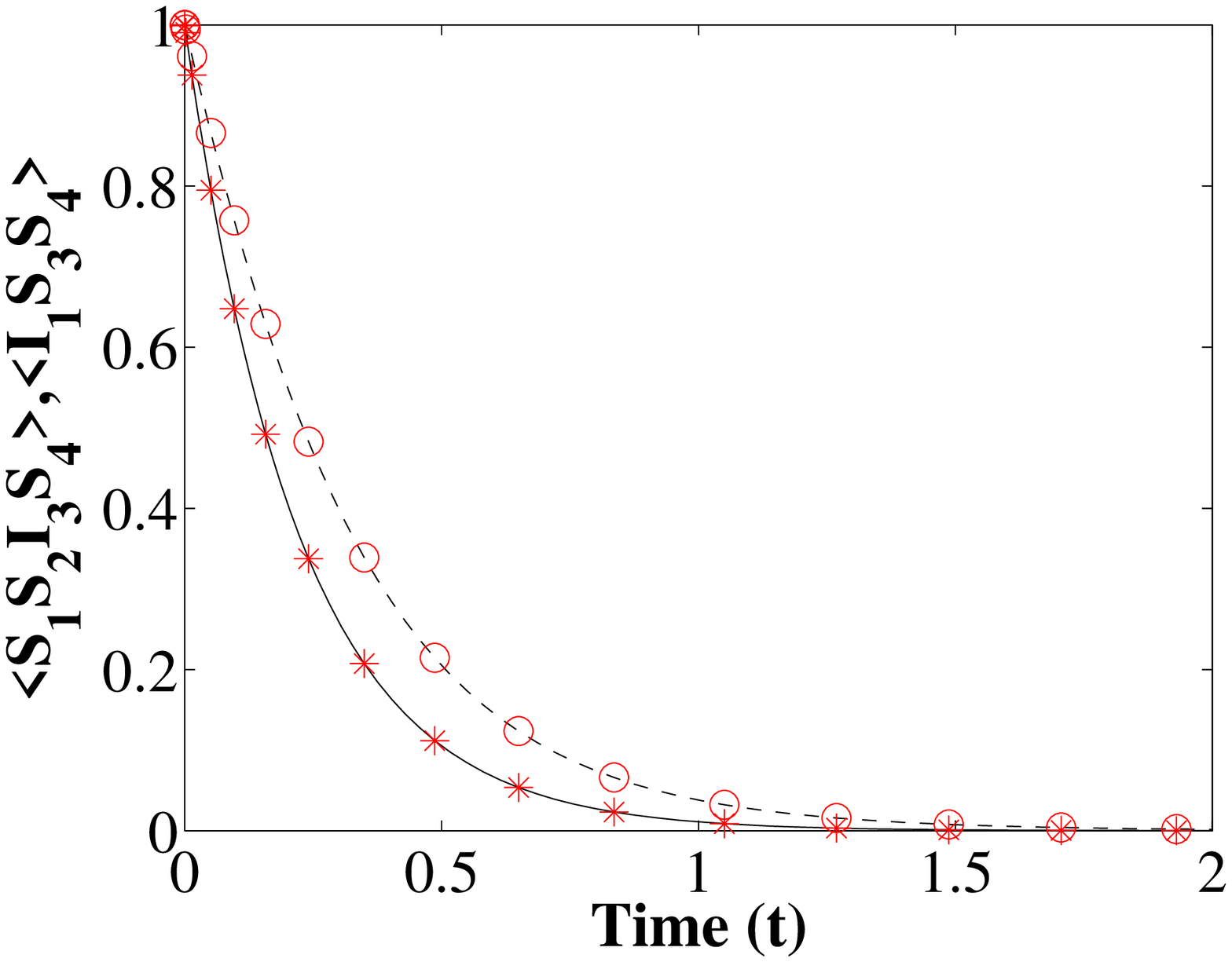,width=8cm}
\hspace{1cm}
\epsfig{file=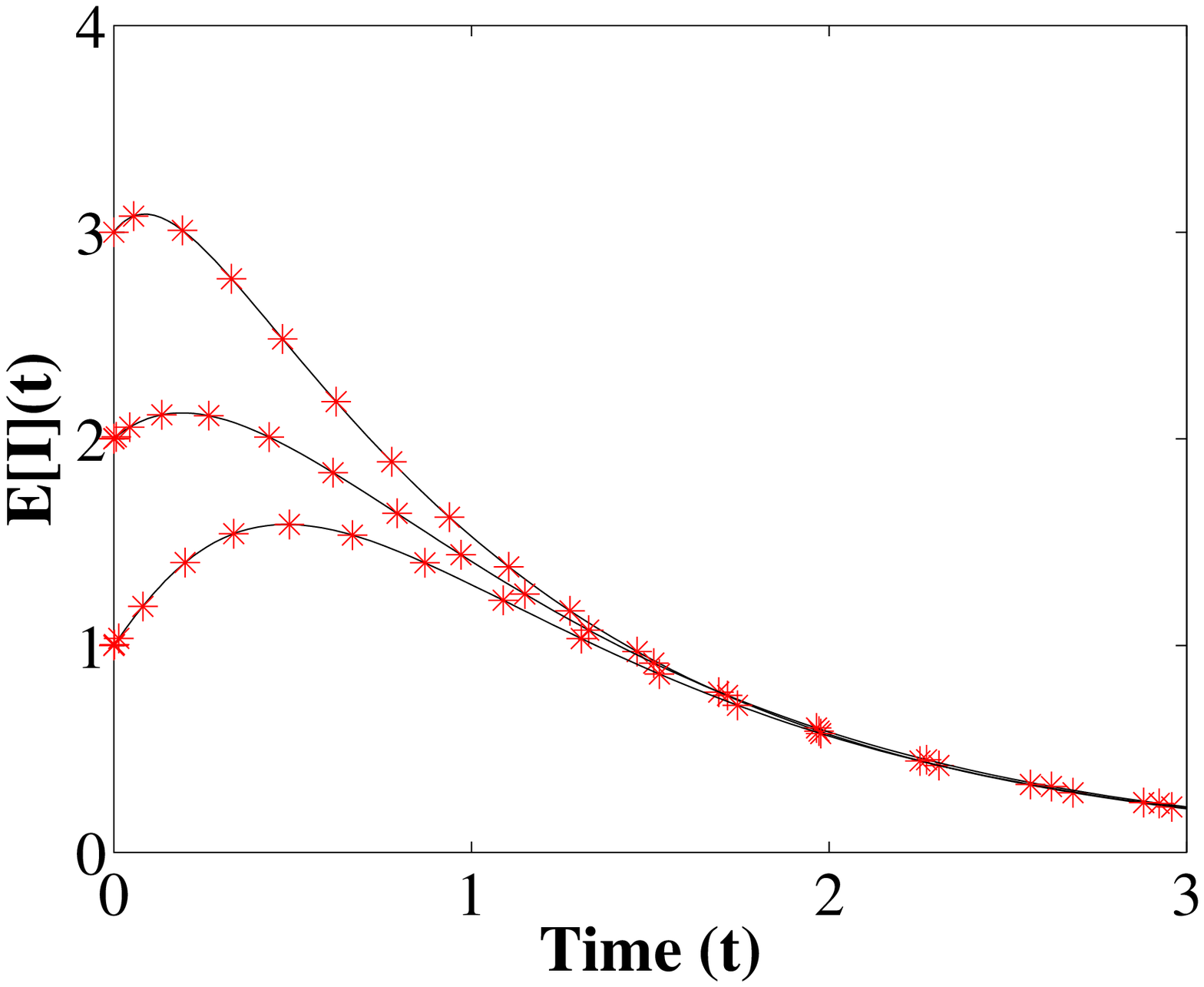,width=8cm}
\caption{\textit{Left panel:} Testing of two closures for the lollipop network (see Fig.~\ref{lollipop-bowtie}c), $\la S_1S_2I_3S_4\ra=\frac{\la S_1S_2 \ra \la S_1I_3S_4\ra}{\la S_1 \ra}
$ and $\la S_1S_2I_3\ra=\frac{\la S_1S_2 \ra \la S_1I_3\ra}{\la S_1 \ra}$, by plotting the left and right hand sides of the closures from the full system. Continuous and dashed lines represent the left hand sides of the quadruple and triangle, and ($\star$) and ($\circ$) represent the right hand sides for the corresponding closures. System started such that  $\la S_1S_2I_3S_4\ra(0)=1$. \textit{Right panel:} The expected prevalence over time for the lollipop network. Starting from three different initial conditions: (a) $\la S_1I_2I_3I_4\ra(0)=1$, (b) $\la S_1S_2I_3I_4 \ra(0)=1$ and (c) $\la 
S_1S_2I_3S_4 \ra(0)=1$. Parameter values for all cases are $\tau=7/4$ and $\gamma=1$.}\label{test_lollipop_closure}
\end{figure}

Obviously, the main role of closures is to reduce the number of equations and this can be successfully achieved for small or other networks with simple structure, such as the line and star networks. The reduction in equation numbers is illustrated in Table \ref{NoEqRedClo}. This is by no means and exhaustive list, but simply highlights the potential benefits of good closures. While the more theoretical approach of using full system equations provides a platform to test the validity of our intuition, it is not practical for networks of realistic size. Clearly, this avenue is only useful for simple, toy examples. However, numerical simulation provides an alternative, and for the case of larger networks we can test  potential closures without the need of writing down a large set of self-consistent equations. For example, based on the intuition gained so far, we test the validity of some plausible closures on the bow tie network (see Fig.~\ref{lollipop-bowtie}e). Namely we consider the following closures: 
\be
\la I_1S_2S_3I_4\ra=\frac{\la I_1S_2S_3\ra \la S_3I_4\ra}{\la S_3\ra} \,\, \text{and} \,\, \la S_2S_3I_5\ra=\frac{\la S_2S_3\ra \la S_3I_5\ra}{\la S_3\ra} .
\eeq
Testing their validity is simply a matter of numerically evaluating the probability of parts of the network being in particular states at given times. This amounts to recording the presence or otherwise of given state configurations across a fixed set of nodes. Averaging over sufficient simulations, provides an excellent approximation of the desired probability of observing a given state configuration across a given part of the network. Figure~\ref{GoodCloBT} shows clearly that the candidate closures are likely to be exact.

The simple analysis thus far, suggests that loops cannot be closed by breaking them down to their component parts. 
However, the alternative, where the loops are kept closed can be considered.\\

\begin{figure}
\epsfig{file=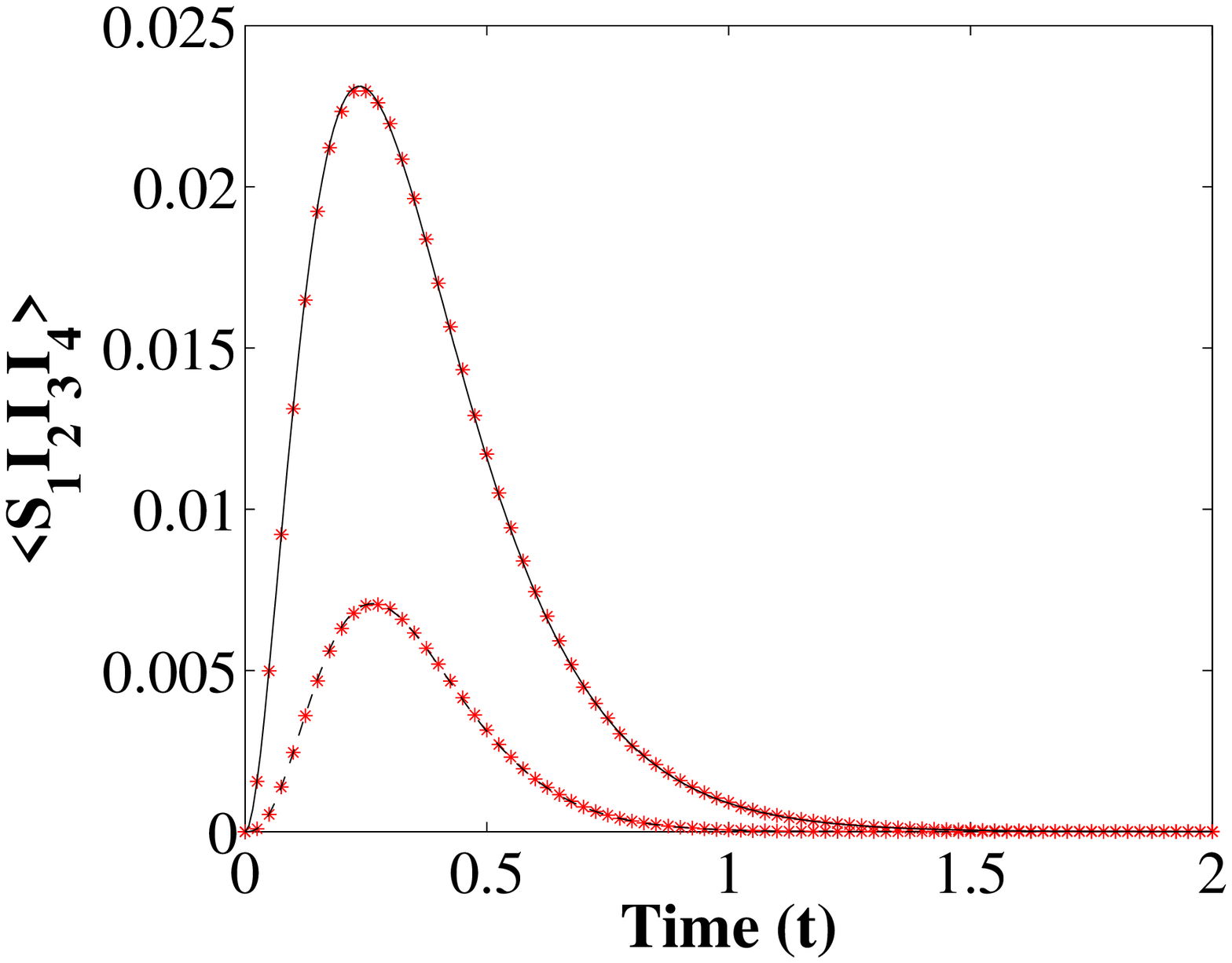,width=8cm}
\hspace{1cm}
\epsfig{file=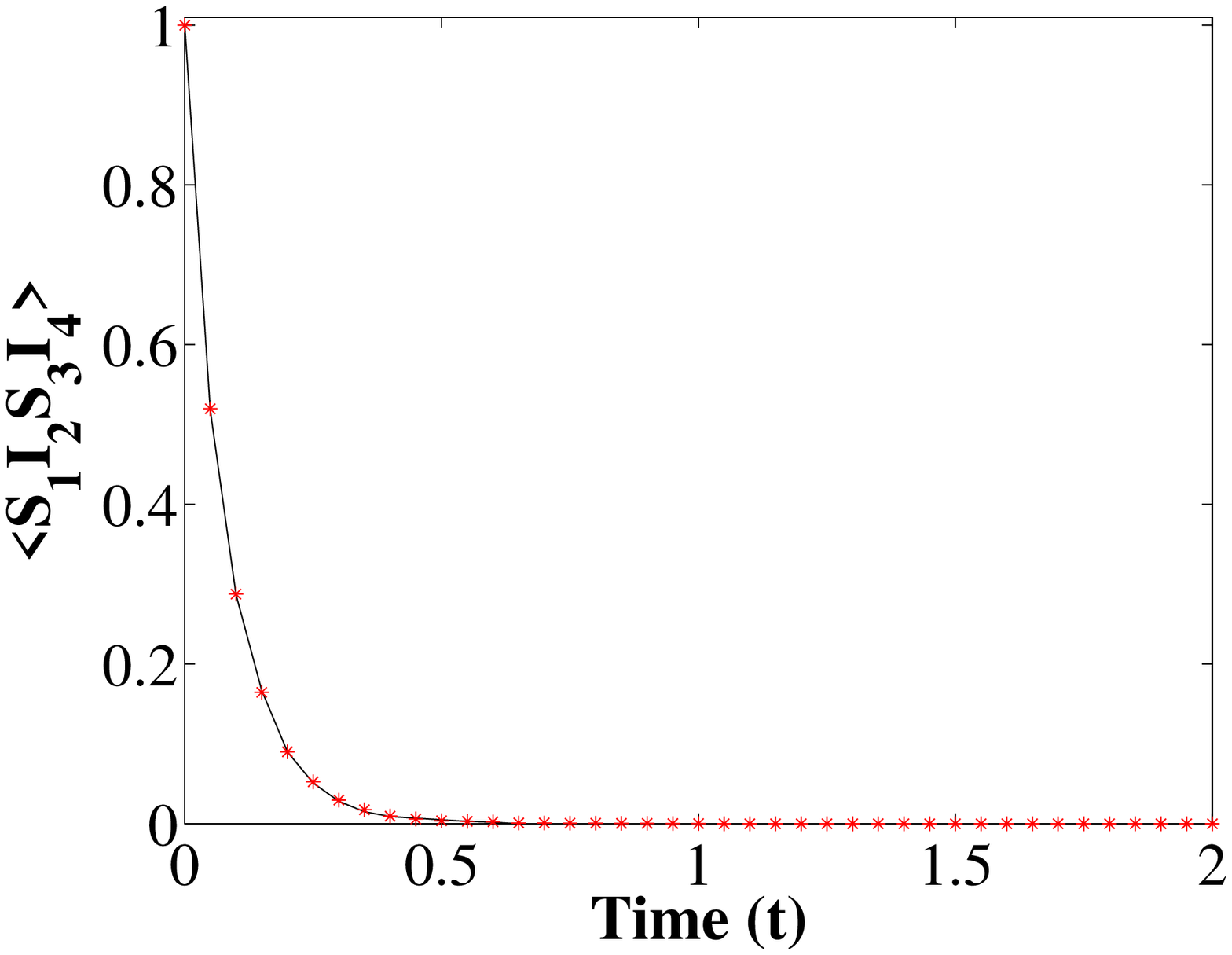,width=8cm}
\caption{\textit{Left panel:} Testing of the closure $\la S_1I_2I_3I_4\ra=\frac{\la S_1I_2I3\ra \la S_1I_4\ra}{\la S_1 \ra}$ for the toast network (see Fig.~\ref{lollipop-bowtie}d)
by plotting the left (continuous line) and right hand (dashed line) sides of the closures from the full system, with further output from Gillespie-type simulation ($\star$) by averaging  over $10^6$ simulations. \textit{Right panel:} Testing of the closure $\la S_1I_2S_3I_4\ra=\frac{\la S_1I_2S_3 \ra \la S_1S_3I_4\ra}{\la S_1S_3 \ra}$ for the same network with results exclusively based on the full system. The probability of the quadruple and the right hand side of the closure are represented by continuous line and ($\star$), respectively. System started such that  $\la S_1I_2S_3S_4\ra(0)=1$ (left panel) and $\la S_1I_2S_3I_4\ra(0)=1$ (right panel). Parameter values for all cases are $\tau=10/4$ and $\gamma=1$.}\label{test_toast_closure}
\end{figure}


\begin{table}
\begin{center}
\begin{tabular}{ | m{2.0cm} | m{3.5cm} | m{1.5cm} | }
  \hline
  & \textbf{NFS} & \textbf{RS} \\
  \hline
  	\begin{tikzpicture}
			\node[placetab] (left) at ( -0.5,0) {$$};
			\node[placetab] (middle) at ( -0.0,0) {$$};			
			\node[placetab] (right) at (0.5,0) {$$};
			\draw [-] (left) -- (middle);
			\draw [-] (right) -- (middle);			
	\end{tikzpicture} 
  & 13 & 10  \\
  \hline
    	\begin{tikzpicture}
			\node[placetab] (leftleft) at ( -0.75,0) {$$};
			\node[placetab] (left) at ( -0.25,0) {$$};			
			\node[placetab] (right) at ( 0.25,0) {$$};			
			\node[placetab] (rightright) at (0.75,0) {$$};
			\draw [-] (leftleft) -- (left);
			\draw [-] (left) -- (right);
			\draw [-] (right) -- (rightright);						
	\end{tikzpicture} 
  & 23 & 12  \\
  \hline
      	\begin{tikzpicture}
			\node[placetab] (leftleft) at ( -0.75,0) {$$};
			\node[placetab] (left) at ( -0.25,0) {$$};			
			\node[placetab] (right) at ( 0.25,0) {$$};			
			\node[placetab] (rightright) at (1,0) {$$};
			\draw [-] (leftleft) -- (left);
			\draw [-] (left) -- (right);
			\draw [dotted] (right) -- (rightright);						
	\end{tikzpicture} 
  & $(3N^2-N+2)/2$ & $5N-3$  \\
  \hline
      	\begin{tikzpicture}
  			\node[placetab] (middle) at ( 0.0,0) {$$};
			\node[placetab] (leftbottom) at ( -0.35,-0.35) {$$};			
			\node[placetab] (rightbottom) at ( 0.35,-0.35) {$$};			
			\node[placetab] (top) at (0,0.5) {$$};
			\draw [-] (middle) -- (leftbottom);
			\draw [-] (middle) -- (rightbottom);
			\draw [-] (middle) -- (top);	
	\end{tikzpicture} 
   & 27 & 17  \\
  \hline
      	\begin{tikzpicture}
  			\node[placetab] (middle) at ( 0.0,0) {$$};
			\node[placetab] (leftbottom) at ( -0.35,-0.35) {$$};			
			\node[placetab] (righttop) at ( -0.35,0.35) {$$};			
			\node[placetab] (right) at (0.5,0) {$$};
			\draw [-] (middle) -- (leftbottom);
			\draw [-] (middle) -- (righttop);
			\draw [-] (middle) -- (right);
			\draw [-] (leftbottom) -- (righttop);				
	\end{tikzpicture} 
& 35 & 26 \\
  \hline
\end{tabular}
\end{center}
  \caption{Reduction in the number of equations due to closures in a number of networks. $N$ stands for the number of nodes in a line network.}
  \label{NoEqRedClo}
\end{table}

\section{Main result for networks with loops}

\subsection{Network structure driven closures}
The analysis in this paper reveals an important relation between the structure of the network on which the epidemic is modelled and
the type of closures that are feasible. Moreover, the structural properties discussed below will also serve as a good indicator of the
feasibility of writing down exact equations for a given network. The two important structural properties are \cite{Diestel}:
\begin{defi} Let $G=\{V,E\}$ be a connected network. Let $v$ be a vertex of $G$, $v \in G(V)$. 
A node $v$ is called a \textbf{cut-vertex}, iff $G \setminus \{v\}$ is disconnected.
\end{defi}
For our purposes we are interested in cut-vertices, i.e single nodes whose removal leads to disconnected components or subnetworks.
The second edge property that is of interest and related to the nodal property is:
\begin{defi} Let $G=\{V,E\}$ be a connected graph. An edge $e \in G(E)$ is called a \textbf{bridge} iff its removal increases the number of connected components.
It follows that an edge is a bridge iff it is not contained in any cycle, and that the end nodes of a \textbf{bridge} are \textbf{cut-vertices}.
\end{defi}
Examples of cut-vertices are provided in Fig.~\ref{lollipop-bowtie}, namely nodes $\{1\}$, $\{3\}$, $\{\{3\},\{4\}\}$ and $\{1\}$ are cut-vertices in the lollipop, bow tie, bow tie with a 
bridge and start triangle networks, respectively. Similarly, edge $(3,4)$ is a bridge in the bow tie with a bridge network. Further examples are provided in Fig.~\ref{CutVertDecomp}. We also note that for all our simple examples closures worked around cut-vertices, see the middle node in an open triple and the degree 3 node in the lollipop.

\begin{figure}
\begin{center}
\epsfig{file=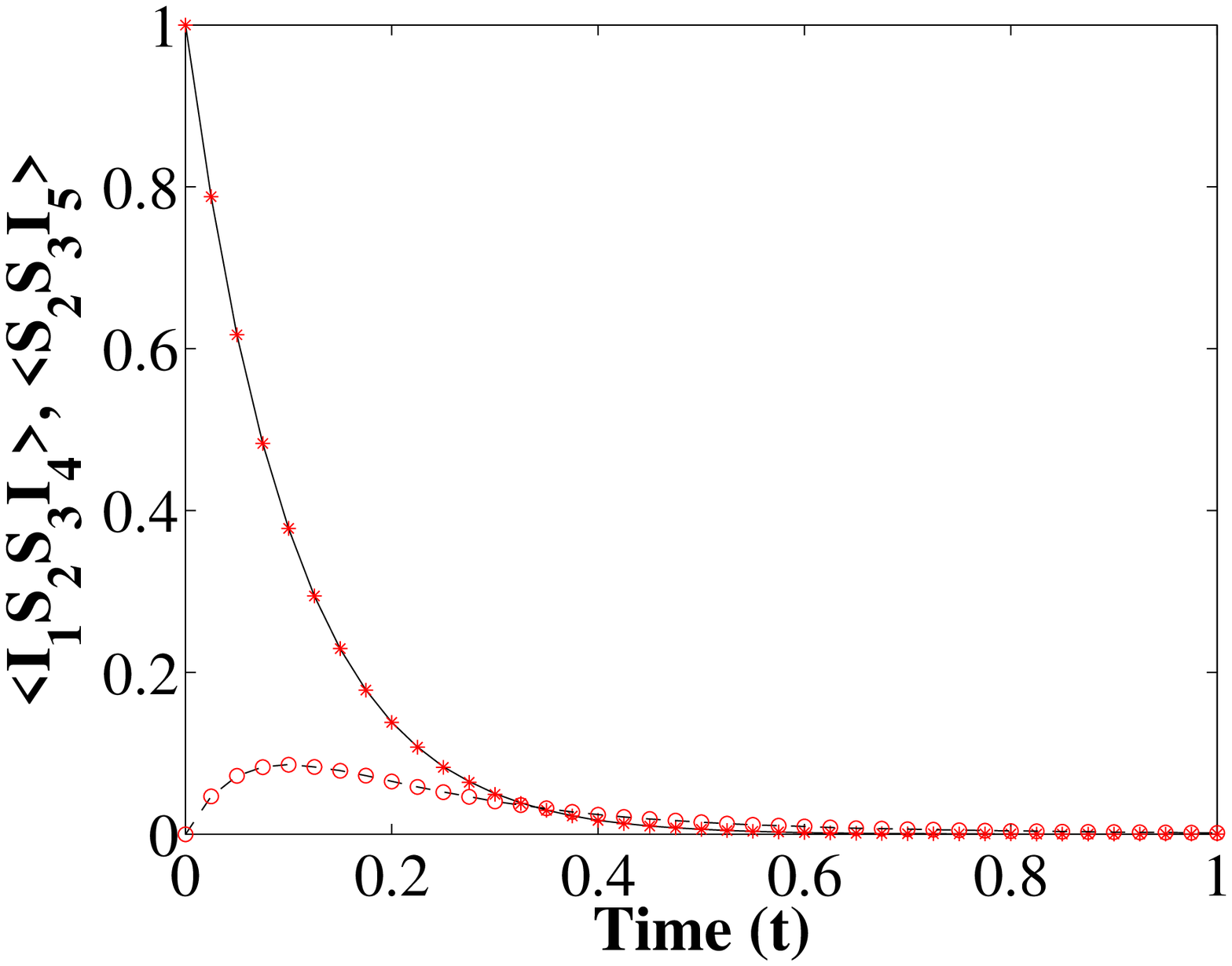,width=11cm}
\end{center}
\caption{Testing of two closures on the bow tie (see Fig.~\ref{lollipop-bowtie}(e)), ($\la I_1S_2S_3I_4\ra=\frac{\la I_1S_2S_3 \ra \la S_3I_4\ra}{\la S_3 \ra}$ and $\la S_2S_3I_5\ra=\frac{\la S_2S_3 \ra \la S_3I_5\ra}{\la S_3 \ra}$), by plotting the average of $10^5$ Gillespie-type simulations of the quadruple and the triple with continuous and dashed lines, respectively. The corresponding right hand sides of the closures are plotted with ($\star$) and ($\circ$), respectively. System started such that  $\la I_1S_2S_3I_4S_5\ra(0)=1$, and hence, $\la S_2S_3I_5\ra(0)=0$. Parameter values for all cases are $\tau=10/4$ and $\gamma=1$.}
\label{GoodCloBT}
\end{figure}

\subsection{Main result}
Based on the intuition gained from the closures on simple networks and their link to the structural properties of the network, via cut-vertices, we can state our main result that generalises the closures in Sharkey et. al. \cite{Sharkeyetal13Exact}, and formalises the link between closures and the structural properties of the network. This leads to the following theorem:
\begin{theo}
Let $G=\{V,E\}$ be a network with $N$ vertices ($V=\{1,2, \dots, N\}$) and a set of edges given by $E$. Consider a connected subset of vertices $F=\{v_1, v_2, \dots, v_k\} \subseteq V$, and assume that $\exists v_{i^{*}} \in F$, a cut-vertex in $G$, such that $F \setminus \{v_{i^{*}}\}$ is partitioned 
into at least two disjointed components with vertices $F_1=\{v_1, v_2, \dots, v_{i-1}\}$ and $F_2=\{v_{i+1}, v_{i+2}, \dots, v_{k}\}$ belonging to any such two, distinct and disjointed components or subnetworks. Then the following equation holds:
\begin{equation}
\Scale[1.150]{\la Z_{v_1}Z_{v_2}\cdots Z_{v_{i-1}}S_{v_{i^{*}}}Z_{v_{i+1}}Z_{v_{i+2}}\cdots Z_{v_{k}}\ra(t) = \frac{\la Z_{v_1}Z_{v_2}\cdots Z_{v_{i-1}}S_{v_{i^{*}}}\ra(t)\la S_{v_{i^{*}}}
Z_{v_{i+1}}Z_{v_{i+2}}\cdots Z_{v_{k}}\ra(t)}{\la S_{v_{i^{*}}}\ra(t)},} \label{general_closure}
\end{equation}
where $Z_{v_i} = S$ or $I$ for $\forall v_i \neq v_{i^{*}}$, and $\la \cdot \ra$ denotes the probability of a given subgraph being in a given 
state at a given time.
\end{theo}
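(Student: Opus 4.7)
The plan is to recognise that the claimed identity is, after clearing the denominator, equivalent to a conditional independence statement. Writing $\mathcal{A}=\{F_1 \text{ is in state } Z_{F_1}\}$, $\mathcal{B}=\{F_2 \text{ is in state } Z_{F_2}\}$ and $\mathcal{C}=\{v_{i^*} \text{ is } S \text{ at time } t\}$, the formula (\ref{general_closure}) can be rewritten as
\[
P(\mathcal{A}\cap\mathcal{B}\mid\mathcal{C})=P(\mathcal{A}\mid\mathcal{C})\,P(\mathcal{B}\mid\mathcal{C}),
\]
so the whole theorem reduces to showing that, conditional on the cut-vertex being susceptible at time $t$, the states of the vertices in $F_1$ and those in $F_2$ are independent.

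To make this rigorous, I would use the standard graphical (Sellke-type) construction of the Markovian $SIR$ process, under the convention of \cite{Sharkeyetal13Exact} that the initial states of distinct vertices are independent. For every directed edge $(u,w)$ place an independent Poisson process of rate $T_{uw}$, each arrival triggering an $S\!\to\!I$ transition at $w$ iff $u$ is currently $I$; for every vertex $v$ place an independent exponential recovery clock of rate $\gamma_v$. The key structural observation is that $S$ is \emph{absorbing from below}: a vertex is $S$ at time $t$ if and only if it has been $S$ throughout $[0,t]$. In particular, on the event $\mathcal{C}$ the cut-vertex $v_{i^*}$ never transmits infection during $[0,t]$.

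Let $A$ and $B$ denote the unions of connected components of $G\setminus\{v_{i^*}\}$ that contain $F_1$ and $F_2$, respectively; these are disjoint because $v_{i^*}$ is a cut-vertex of $G$. Split the graphical primitives into two packages: the $A$-side one consists of the initial states on $A$, the Poisson clocks on all edges with both endpoints in $A\cup\{v_{i^*}\}$, and the recovery clocks on $A$; the $B$-side package is defined analogously. The two packages are mutually independent. Given $\{v_{i^*}(0)=S\}$, the event $\mathcal{C}$ factorises as $\mathcal{C}=\mathcal{C}_A\cap\mathcal{C}_B$, where $\mathcal{C}_A=\{\text{no arrival on an }A\text{-to-}v_{i^*}\text{ edge actually infects }v_{i^*}\text{ during }[0,t]\}$ and $\mathcal{C}_B$ is its analogue; crucially, $\mathcal{C}_A$ and $\mathcal{A}$ are functions only of the $A$-side primitives, and $\mathcal{C}_B$ and $\mathcal{B}$ only of the $B$-side primitives. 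Therefore, conditional on $\{v_{i^*}(0)=S\}$, the pairs $(\mathcal{A},\mathcal{C}_A)$ and $(\mathcal{B},\mathcal{C}_B)$ are independent, and a brief manipulation of the resulting joint probabilities delivers $P(\mathcal{A}\cap\mathcal{B}\cap\mathcal{C})\,P(\mathcal{C})=P(\mathcal{A}\cap\mathcal{C})\,P(\mathcal{B}\cap\mathcal{C})$, which is precisely (\ref{general_closure}).

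The main obstacle is the first-glance worry that conditioning on $\mathcal{C}$ should be able to couple $F_1$ and $F_2$, since $\mathcal{C}$ is built out of information from both sides. The cut-vertex hypothesis is what disposes of this: it forces the clean decomposition $\mathcal{C}=\mathcal{C}_A\cap\mathcal{C}_B$ with each factor measurable on only one side. Two housekeeping matters remain: (i) $G\setminus\{v_{i^*}\}$ may have more than two components, which is harmless because one simply groups the additional components arbitrarily with the $A$-side or the $B$-side; and (ii) for the probabilistic argument to start cleanly one needs the node states to be independent at $t=0$, which is the standard assumption used in \cite{Sharkeyetal13Exact}.
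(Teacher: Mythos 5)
Your proposal is correct and takes essentially the same route as the paper's proof: both reduce the closure, after clearing the denominator, to the statement that conditional on the cut-vertex being susceptible at time $t$ the states of $F_1$ and $F_2$ are independent, the paper justifying this with the verbal observation that a susceptible cut-vertex has never transmitted and so the two sides cannot have communicated. Your version adds a rigorous substantiation of that key step via the graphical construction and the factorisation of $\{v_{i^*}\text{ is }S\text{ on }[0,t]\}$ into side-measurable events $\mathcal{C}_A\cap\mathcal{C}_B$ — a worthwhile strengthening, with the small caveat that $\mathcal{C}_A$ and the event on $F_1$ should be defined through the auxiliary process in which $v_{i^*}$ is never infectious, so that they are genuinely functions of the $A$-side primitives alone (they agree with the true-process events on $\mathcal{C}$).
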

\begin{proof} We begin by noting that:
\begin{enumerate}[i.]
\item $F$ could be the entire vertex set or a strict subset of it, and
\item By assumption, the removal of $v_{i^*}$ means that for
$\forall v_a\in\{F_1\}$ and $\forall v_b\in \{F_2\}$ there are no paths in $G$ that connect $v_a$ and $v_b$.
\end{enumerate}
By definition of conditional probabilities,
\beq
\la Z_{v_1}Z_{v_2}\cdots Z_{v_{i-1}}S_{v_{i^{*}}}Z_{v_{i+1}}Z_{v_{i+2}}\cdots Z_{v_{k}}\ra=\la Z_{v_1}Z_{v_2}\cdots Z_{v_{i-1}}S_{v_{i^{*}}}Z_{v_{i+1}}Z_{v_{i+2}}\cdots Z_{v_{k}} |
S_{v_{i^{*}}}\ra\la S_{v_{i^*}} \ra, \label{cond_prob_1}
\eeq
where the conditional probability can be written as
\begin{eqnarray}
\la Z_{v_1}Z_{v_2}\cdots Z_{v_{i-1}}S_{v_{i^{*}}}Z_{v_{i+1}}Z_{v_{i+2}}\cdots Z_{v_{N}}|S_{v_{i^{*}}}\ra&=&\la Z_{v_1}Z_{v_2}\cdots Z_{v_{i-1}}S_{v_{i^{*}}}|S_{v_{i^*}}\ra \nonumber 
\\
&&\la S_{v_{i^*}}Z_{v_{i+1}}Z_{v_{i+2}}\cdots Z_{v_{N}}|S_{v_{i^*}}\ra. \label{cond_prob_2}
\end{eqnarray}
The equality above holds due to the two subgraphs spanned by $F_1$ and $F_2$ being disjointed with no other links, except via $v_{i^*}$. Given that the cut-vertex is susceptible, it 
means that transmission via this route has not occurred, and thus the  projection of the system state on the subgraphs spanned by $F_1$ and $F_2$ must be independent.
Combining Eqs.~(\ref{cond_prob_1}-\ref{cond_prob_2}) and using that
\[ \la Z_{v_1}Z_{v_2}\cdots Z_{v_{i-1}}S_{v_{i^{*}}}|S_{v_{i^*}}\ra =\la Z_{v_1}Z_{v_2}\cdots Z_{v_{i-1}}S_{v_{i^*}}\ra / \la S_{v_{i^*}}\ra \]
and
\[ \la S_{v_{i^*}}Z_{v_{i+1}}Z_{v_{i+2}}\cdots Z_{v_{k}}|S_{v_{i^*}}\ra= \la S_{v_{i^*}}Z_{v_{i+1}}Z_{v_{i+2}}\cdots Z_{v_{k}}\ra/\la S_{v_{i^*}}\ra \] gives,
\be
\frac{\la Z_{v_1}Z_{v_2}\cdots Z_{v_{i-1}}S_{v_{i^{*}}}Z_{v_{i+1}}Z_{v_{i+2}}\cdots Z_{v_{k}}\ra}{\la S_{v_{i^*} \ra}}=\frac{\la Z_{v_1}Z_{v_2}\cdots Z_{v_{i-1}}S_{v_{i^{*}}}\ra \la 
S_{v_{i^{*}}}Z_{v_{i+1}}Z_{v_{i+2}}\cdots Z_{v_{k}}\ra}{\la S_{v_{i^*}} \ra^2},
\eeq
which is equivalent to the general closure specified in Eq.~(\ref{general_closure}).\\
\end{proof}
It is straightforward to see that all our simple intuitive closures that were exact are special cases of this main result.\\

\noindent \textbf{Special case for tree-like networks\\}
In a tree-like network all nodes (except those with degree one or one neighbour) are cut-vertices, and noting that the equations for triples in Eqs.~(\ref{SharkeyOriginal}) are all such 
that the middle node is susceptible, it follows that
\beq
\la S_iS_jI_k\ra=\frac{\la S_iS_j\ra \la S_jI_k\ra}{\la S_j \ra} \,\, \text{and} \,\, \la I_iS_jI_k\ra=\frac{\la I_iS_j\ra\la S_jI_k\ra}{\la S_j\ra}.
\ee

\section{Applications of the main result}
Below, we give an example for a specific network, the star-triangle network Fig.~\ref{lollipop-bowtie}g, where we write down and program in the exact system and compare the results based on the ODEs to results from Gillespie-type simulation. This is complemented by giving an upper bound on the number of equations needed for an exact representation for networks with non-overlapping loops of at most size 3, which means that overlap via edges is not allowed. More importantly, we generalise the applicability of the reduction by closure technique for arbitrary networks, and also provide an upper bound on the number of equations needed for an exact system.

\subsection{Star-triangle network}
Here we give details of the usefulness of the general result on the link between cut-vertices and closures. To do this
we consider the model example of a star-triangle network, as given in Fig.~\ref{lollipop-bowtie}g. 
Suppose that the triangles are labeled $1$ to $M$, the central node is labeled $1$, and the exterior nodes (i.e. the $i^{th}$ triangle except the central node) in the 
$i^{th}$ triangle are $i_1, \, i_2$. For this setup the reduced, exact system can be written down as shown in Appendix \ref{StTriEqs}. By considering the reduced system, it is straightforward to see that the number of equations depends on the number of triangles. This dependency can be quantified by a multiplicative factor which gives the number of
necessary equations for individual sub-triangles. However, we note that there is a significant difference in equation numbers when considering an isolated or single subnetwork by itself or as part of a bigger network. Namely, in this case, due to node 1 being a cut-vertex, the network will break down into $M$ disjointed triangles, upon its removal. For the purpose of generating the reduced system at the network level, these disjointed triangles  should not be considered as isolated triangles, but rather as being part of the whole network. An isolated triangle needs 18 equations (i.e. 6 equations for the nodes as each node can be $S$ or $I$, 6 equations for the edges as each edge can be $SI$ or $IS$, and 6 triples since out of the eight possible configurations with nodes being $S$ or $I$, the $SSS$ and $III$ triples are dynamically unimportant, see Appendix \ref{TriangleEqs}). However, when a triangle is considered as part of a whole network, the 18 equations need to be extended to include differential equations for $SS$-type edges, where at least one of the end nodes is a cut-vertex. This in fact accounts for infection from outside the triangle. Similarly, the equations for triples need to account for the $SSS$ triple in order to capture infection coming via a cut-vertex from outside. This extension procedure, for the current setup, requires 3 extra equations, two for the edges and one for the triples. Obviously, an equation for $\la S_{i_1}S_{i_2} \ra$ is not needed, as this pair cannot become infected from outside. Hence, to summarise, the reduced system seems to need $21 \cdot M$ equations. However, the cut-vertex (i.e. node 1), has a multiplicity $M$, and therefore, $\la S_1 \ra$ and $\la I_1 \ra$ appear $M$ times even though these are only needed once. Thus, the final number of equations in the reduced system is given by $21 \cdot M - 2 \cdot (M-1)=19 \cdot M+ 2$.

Programming in these $19 \cdot M+2$ equations in a systematic way leads to an exact representation of the $SIR$ dynamics on the star triangle network. Figure~\ref{StarTriNetw} show results from comparing the numerical solutions of the resulting system of ODEs to simulation results. The plot shows excellent agreement and supports the main result, namely that closures are exact, and hence, the ODE representation is exact. The same figure shows that distributing the same amount of initially infected nodes differently leads to different dynamics, and as expected, distributing the index cases in a more random, or less regular way, leads to a larger epidemic in this very structured network. 

\begin{figure}
\begin{center}
\epsfig{file=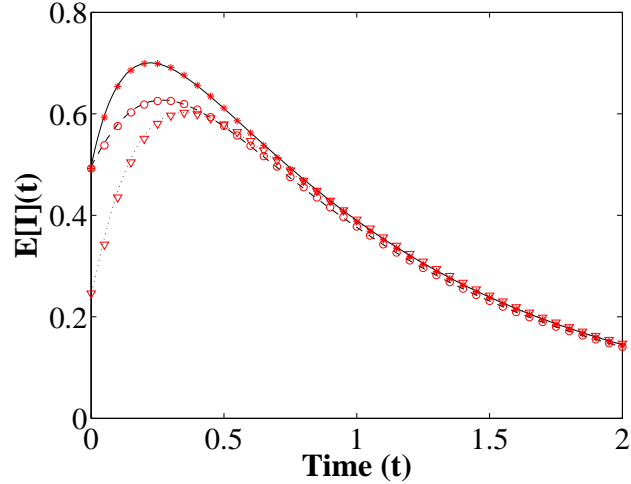,width=9cm}
\end{center}
\caption{Testing of the exact system and closures for the star network with $64$ triangles (see Fig.~\ref{lollipop-bowtie}g) by plotting the prevalence of infection based on the deterministic closed system (lines), as well as the  average of $10^4$ Gillespie-type simulations (markers). System started with the central node susceptible, and with (a) one $S$ and one $I$ node in each triangle (continuous line and ($\star$)), (b) two $S$ nodes in half of the triangles and two $I$ nodes in the rest (dashed line and ($\circ$)), and  (c) half of the triangles completely susceptible while the other half as in (a) (dotted line and ($\triangledown$)). This amounts to the same number of initially infected nodes, but distributed differently, for (a) and (b), and half as many infected nodes for (c). Parameter values for all cases are $\tau=3$ and $\gamma=1$.}
\label{StarTriNetw}
\end{figure}

\subsection{General networks with loops of maximum size 3}
The star-triangle discussed in the previous section satisfies this constraint. However, here we will provide a much more general statement for a wider class of networks. 
If a network has loops no larger than size 3 (which implies that triangles cannot have overlapping edges), see Figs.~\ref{lollipop-bowtie}g and \ref{TreeLikeNetwWithTri}, we can give an upper bound on the size of the system of equations describing the system dynamics.

\begin{theo}
Consider a network with $N$ nodes, $E$ edges, $T$ triangles and no larger loops. The number of equations needed to fully describe the system dynamics is less than $2N+3E+7T 
\leq 10N$.
\end{theo}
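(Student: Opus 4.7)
The plan is to count the equations by the cardinality of the subgraph they describe, bound each category in terms of $N$, $E$, and $T$, and then close the inequality via two combinatorial identities for cactus graphs of girth three.

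For the upper bound $2N+3E+7T$ I would argue order by order. Singles contribute at most $2N$: the two variables $\la S_i\ra$ and $\la I_i\ra$ per node determine $\la R_i\ra$. Pairs contribute at most $3E$: for each edge the evolution invokes $\la S_iS_j\ra$, $\la S_iI_j\ra$, and $\la I_iS_j\ra$, while $\la I_iI_j\ra$ never appears on the right-hand side of any evolution equation. Triples contribute at most $7T$: for each triangle $\{i,j,k\}$ there are $2^3=8$ labellings and only $\la I_iI_jI_k\ra$ is dynamically decoupled. The crux of the argument is to show that no moments on four or more vertices need to be integrated. For this I would invoke Theorem~1: under the hypotheses that all cycles are triangles and that no two triangles share an edge, any connected vertex set of size at least four that is reached by the differentiation hierarchy must contain, at an interior position, a cut-vertex of $G$. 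Moreover every such higher moment is produced by an infection event that propagates \emph{through} the cut-vertex, which forces the cut-vertex to be in state $S$ in the newly generated moment. Theorem~1 then factorises the term exactly into a product of already-tracked lower-order moments, so no new equation is required. This closure argument is the step I expect to require the most care, since one must trace every possible way a quad or higher moment can appear in the differentiated hierarchy and confirm that it satisfies the hypothesis of the theorem.

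To deduce $2N+3E+7T\leq 10N$, I would combine two structural facts for a cactus whose cycles are all triangles. First, the cyclomatic number equals the number of independent cycles, so $E=N-c+T$ where $c$ is the number of connected components. Second, $2T\leq N-c$, proved by induction on the block-cut tree: any leaf block is either a bridge (peel its pendant vertex, decrementing $N$ by one and leaving $T$ unchanged) or a triangle (peel its two non-cut vertices, decrementing $N$ by two and $T$ by one), and in either case the inductive hypothesis carries through. Substituting both identities,
\[
2N + 3E + 7T \;=\; 5N - 3c + 10T \;\leq\; 5N - 3c + 5(N-c) \;=\; 10N - 8c \;\leq\; 10N,
\]
with strict inequality whenever the network is nonempty ($c\geq 1$), which gives the required bound.
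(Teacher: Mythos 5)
Your proposal is correct and follows essentially the same route as the paper: $2N$ single-node equations, $3E$ pair equations (the $SS$ pairs being those forced by the closures), $7T$ triangle-triple equations, an appeal to Theorem~1 to close every moment on four or more vertices through a susceptible cut-vertex, and finally the cactus identities $E=N-1+T$ and $T\leq N/2$. The only differences are cosmetic: your block-cut-tree induction for $2T\leq N-c$ and the explicit component count make the closing inequality marginally more general, and the case analysis you defer --- that the only quadruples generated by the hierarchy are lollipops with a susceptible stem node, never paths of length four --- is precisely the tracing the paper carries out in detail.
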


\begin{proof} We start building our system by formulating the equation for the probability of each node being either infected or susceptible. This results in $2N$ equations that depend on certain edges (or pairs) and node-state combinations, and equations for the probabilities of these specific edges are needed. One such edge-type is $\langle S_i I_j \rangle$ for all $i \neq j$, since a node $i$ can get infected by being susceptible and having an infected neighbour $j$, so both the equation for $\dot{\langle I_i \rangle}$ and $\dot{\langle S_i \rangle}$ contains the variable $\langle S_i I_j \rangle$. No other edge and node-state combination emerges from the equations for single nodes. Thus, we require equations for the probabilities of such edges, and this increase the number of equations to $2N+2E$.
Exactly as before, the equations for the edge-probabilities will involve triples. These triples can be either paths of length 3 or triangles. In the case of a path of length 3 the middle node is 
always susceptible: if we have an edge $S_i I_j$, a node $k$ can infect node $i$ (provided that it is connected to it), so we have $\langle I_k S_i I_j \rangle$ in the equation for $\dot{\langle S_i I_j \rangle}$. Similarly, the previously susceptible node $j$ can get infected by a node $l$ (provided that it is connected to it). Thus, we also have have $\langle S_i S_j I_l \rangle$ in the same equation. Since the middle node in a such a triple is a cut-vertex, our main result guarantees that the triple has an exact closure,
\[
\langle X_i S_j Y_k \rangle=\frac{\langle X_i S_j \rangle \langle S_j Y_k \rangle}{\langle S_j \rangle}.
\]
Hence, equations for such triples are not needed as they can be expressed in terms of already existing singles and edges. However, the use of these closures require extra variables and equations for these are needed. The extra variables are $\langle S_i S_j \rangle$ for all $i \neq j$ since the triples $\langle S_i S_j I_l \rangle$ for 
every $i \neq j$ are needed. This gives us $E$ extra variables and equations (we might not need all of them), and these equations contain no new variables. These in fact only depend on triples such as $S_i S_j I_l$, which we dealt with previously. Hence, this increases the number of equations to $2N+3E$. Triples can also be triangles. In this case a closure is not possible, see previous sections and Appendix (\ref{TriangleEqs}). 

\begin{figure}
 	\begin{center}
		\begin{tikzpicture}
			\node[place] (extraistvan1) at (,) {};
			
			\node[place] (lefttop) at ( -1,1) {};
			\node[place] (middle) at ( 0,0) {};
			\node[place] (5) at ( -2,2) {};
			\node[place] (6) at ( 2,2) {};
			\node[place] (7) at (3,3) {};
			\node[place] (8) at (4.3,3) {};
			\node[place] (9) at (1,3) {};
			\node[place] (10) at (1,4.3) {};
			\node[place] (11) at ( 2,0) {};
			\node[place] (12) at ( 3.3,0) {};
			\node[place] (13) at (0,5.3) {};
			\node[place] (14) at (2,5.3) {};
			%
			\node[place] (16) at (-2,5.3) {};	
			\node[place] (17) at (-1,4.3) {};			
			\node[place] (15) at ( -3.3,2) {};
			\node[place] (righttop) at ( 1,1) {};
			\node[place] (bottom) at (0,-1.3) {};

			\draw [-] (lefttop) -- (middle);
			\draw [-] (5) -- (lefttop);
			\draw [-] (6) -- (righttop);
			\draw [-] (6) -- (7);
			\draw [-] (7) -- (8);
			\draw [-] (6) -- (9);
			\draw [-] (7) -- (9);
			\draw [-] (10) -- (9);
			\draw [-] (14) -- (13);
			\draw [-] (10) -- (13);
			\draw [-] (10) -- (14);
			\draw [-] (5) -- (15);
			\draw [-] (righttop) -- (11);
			\draw [-] (11) -- (12);
			\draw [-] (righttop) -- (middle);
			\draw [-] (lefttop) -- (righttop);
			\draw [-] (middle) -- (bottom);	
			\draw [-] (16) -- (17);			
			\draw [-] (16) -- (13);	
			\draw [-] (17) -- (13);	
		\end{tikzpicture}
	\end{center}
	\caption{An example of a network with maximum loop size 3. Overlap of triangles via edges is not allowed, but triangles can overlap via single nodes.}
	\label{TreeLikeNetwWithTri}
\end{figure}
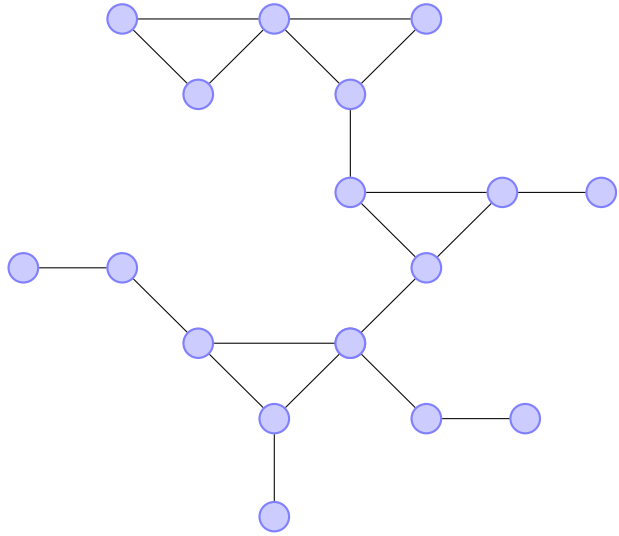			

Let us now turn our attention to triples that are triangles. Every triangle can be of 6 different types (see Appendix \ref{TriangleEqs}): take an edge of state $S_i I_j$ in the triangle. Similarly to the case of paths, the  equation for $\dot{\langle S_i I_j \rangle}$ contains $\langle I_k S_i I_j \rangle$ and $\langle S_i S_j I_k \rangle$, where $k$ is the third node of the triangle. Every type of triple containing exactly one infected node or exactly one susceptible node appears this way, because nodes $i,j,k$ are arbitrary within the triangle. In the equations for triples the probability of quadruples appear. Because of the structure of the graph these could be paths of length 4 or lollipops. But a path of length 4 cannot appear in the equation of a triple, since all of our triples that cannot be closed are triangles. A lollipop can appear in the equation of a path of length 3 or a triangle. In either case, the natural cut-vertex of the lollipop is susceptible, because if it was susceptible in the triple it stays that way, and if it was infected, it is susceptible in the lollipop and the fourth, new node is infected. So by our main result in Theorem1, these subnetworks have closures. For these closures we are going to need $\langle S_i S_j S_k \rangle $ for all $i 
\neq j \neq k$ making a triangle, since the equation of an $ S_i S_j I_k$ triple, where $k$ is  cut-vertex, will require a quadruple where infection to this triangle can come from outside.
Hence, closing such a quadruple will require a triangle with all nodes susceptible. The equations for these new triangles contain no new variables, only lollipops with an infected stem-node, which we dealt with previously. So we get $T$ more equations for these fully susceptible triples on top of the $6T$ equations for other triples and the $2N+3E$ equations thus far. Tallying all this results in $2N+3E+7T$ equations.

Much like trees, networks with loops of maximum size 3 are relatively sparse. The spanning tree of such networks has $N-1$ edges. There are some triangles in the network, and these 
triangles cannot share an edge, otherwise there would be loops of size greater than 3. So every edge is in at most one loop/trinagle. In this case, $E = N-1 + T$, since two edges of the 
triangle were already counted in the spanning tree. $T \leq \frac{N}{2}$ since to construct such a network one always has to add at least two extra nodes to get a new triangle. Summarising the above yields, 
\[
2N+3E+7T = 2N+3(N-1+T)+7T \leq 2N + 3N + \frac{3}{2}N + \frac{7}{2}N = 10N.
\]
\end{proof}

Consider for example the lollipop network. In this case, $2N+3E+7T=2\cdot 4 + 3 \cdot 4 + 7 \cdot 1=27$.  In Appendix \ref{LolEqs} we have 26 equations describing the dynamics, 
since we don't need $\langle S_3 S_4 \rangle$ for any of the closures. For the star-triangle network, see sections above and Appendix \ref{StTriEqs}, of $M$ triangles one needs a maximum of $2\cdot (1+2M)+3 \cdot 3 \cdot M + 7\cdot M=2+20 \cdot M$ equations. Actually, the exact number is smaller since we do not need $\langle S_{i_1} S_{i_2} \rangle$ in any of the triangles. So the number of equations is $2\cdot (1+2 \cdot M)+3 \cdot 3 \cdot M
+ 7 \cdot M - M =2+19 \cdot M$.

\subsection{Feasibility of the reduction by closure technique for general networks}
Here, we provide a recipe-like approach to establish the feasibility of writing down an exact representation for
a given network. To achieve this for a given network $G=\{V,E\}$, the following steps should be taken:
\begin{enumerate}
\item Find all cut-vertices of G by using the \textit{depth-first search} algorithm \cite{Sedgewick}, and denote these by $C=\{v_{i_1}, v_{i_2}, \dots, v_{i_L}\}  \subset V$. This algorithm runs in polynomial time in $(|E|+|V|)$.
\item Splice the original network into independent subnetworks (each subnetwork is well connected, but any two are disconnected) as determined by the number and properties of cut-vertices. Let us assume that this procedure leads to a family of distinct subnetworks denoted by $G_1, G_2, \dots, G_P$, where $G_i=\{V_i,E_i\}$ with $i=1, 2, \dots, P$, and each of these with frequency or counts given by $f_1, f_2, \dots, f_P$, respectively. This can be done in a way in which the cut-vertices are maintained in all
subnetworks that they generate, see Fig.~\ref{CutVertDecomp}. Let us denote by $Ind(v_{i_j})$, where $j=1, 2, \dots, L$, the number of subnetworks that cut-vertex $v_{i_j}$ belongs to. \textit{The subnetwork is in fact a generalisation of the loop concept in that it needs to be connected and with no further cut-vertices. As indicated by our results, closures within loops or subnetworks will not be exact.}
\item The relation between the distinct subnetworks $P$, their frequency, and the number of nodes in the subnetworks (e.g. $|V_1|, |V_2|, \dots, |V_P|$) will determine the number of equations needed for a full, exact representation. This relation is made more precise by the corresponding multipliers $m_1, m_2, \dots, m_P$ which simply denote the number of equations needed to describe exactly the corresponding subnetworks, e.g. an edge needs 7 equations (4 equations for the nodes and 3 equations at pair level).  In a similar way a triangle needs 22 equations (6 equations for the nodes, 9 equations for the edges and 7 equations for the triangles), a cycle graph with 4 nodes needs 45 equations, and the toast network needs 57 equations.  Hence an upper bound for the number of equations needed to describe the epidemic dynamics exactly is given by 
$$N_{EQ}(G)=\sum_{i=1}^{P}m_{i}f_{i}-2\sum_{j=1}^{L}(Ind(v_{i_j})-1).$$
\end{enumerate}
The formula simply takes a sum across the number of equations needed for all subnetworks and adjusts this to account for the unnecessary multiplications caused by cut-vertices being part of multiple subnetworks. Applying this procedure for the simplest cases of tree-like networks gives $N_{EQ}(G)=2 \cdot |V| +3 \cdot |E|$. Moreover, for tree-like networks with triangles only, the removal of all cut-vertices will leave subnetworks of two distinct types, namely the edge and triangle, yielding $N_{EQ}(G)=2 \cdot |V| +3 \cdot |E| +7\cdot T$ equations, where as before $T$ is the number of triangles in the network. The reason for $N_{EQ}(G)$ being an upper bound is due to accounting for all $SS$ pairs regardless of weather all of these are needed to link to other subnetworks. Similarly, other fully susceptible arrangements at higher level may be needed to account appropriately for outside infections (see the detailed explanation in the \textit{Star-triangle network} section). While the formula could be further improved, the exact overestimate depends in a non-trivial way on the interaction between the structure of the network and epidemic dynamics. Our investigations show that removing the unnecessary $SS$ variables will not considerably decrease the number of equations.

We note that for the decomposition of a network there are two extreme scenarios: (a) the network has many cut-vertices and few distinct subnetworks but with high frequency and (b) the network decomposes to relatively few distinct subnetwork, but of large sizes. The tree-like networks are a good example for scenario (a), where the only subnetwork is the edge, where each edge requires only 3 equations. Thus few equations per subnetwork but many subnetworks. More structured networks will typically have distinct subnetworks of larger sizes which for an exact description will require a larger number of equations. Thus fewer subnetworks, but many more equations per subnetwork. More importantly, it is non-trivial to find a simple relation between subnetworks and the number of equations needed for an exact description, and this may require further attention and work. It is straightforward to see that the desirable scenario for an exact representation is scenario (a), and it is likely that in this case an exact description is possible. Complexity quickly increases from 22 equations needed for a fully connected triangle to 57 equations for a subnetwork equivalent to a toast network, see Table \ref{IsoOrPart} that gives equation numbers for these and further examples of small networks. Thus both scenarios require a large number of equations. Generating and implementing the equations needed for an exact description is prone to error and we highly recommend the development of an algorithmic approach, where equations can be generated automatically rather than manually. While in the present project we adopted a manual, direct approach, future work will consider the implementation of an automated procedure for as general a situation as possible. The description above, illustrates clearly that the family of networks with many cut-vertices are more amenable to this approach, and it is likely that for networks with few cut-vertices, the task of writing down an exact system may be out of reach.\\

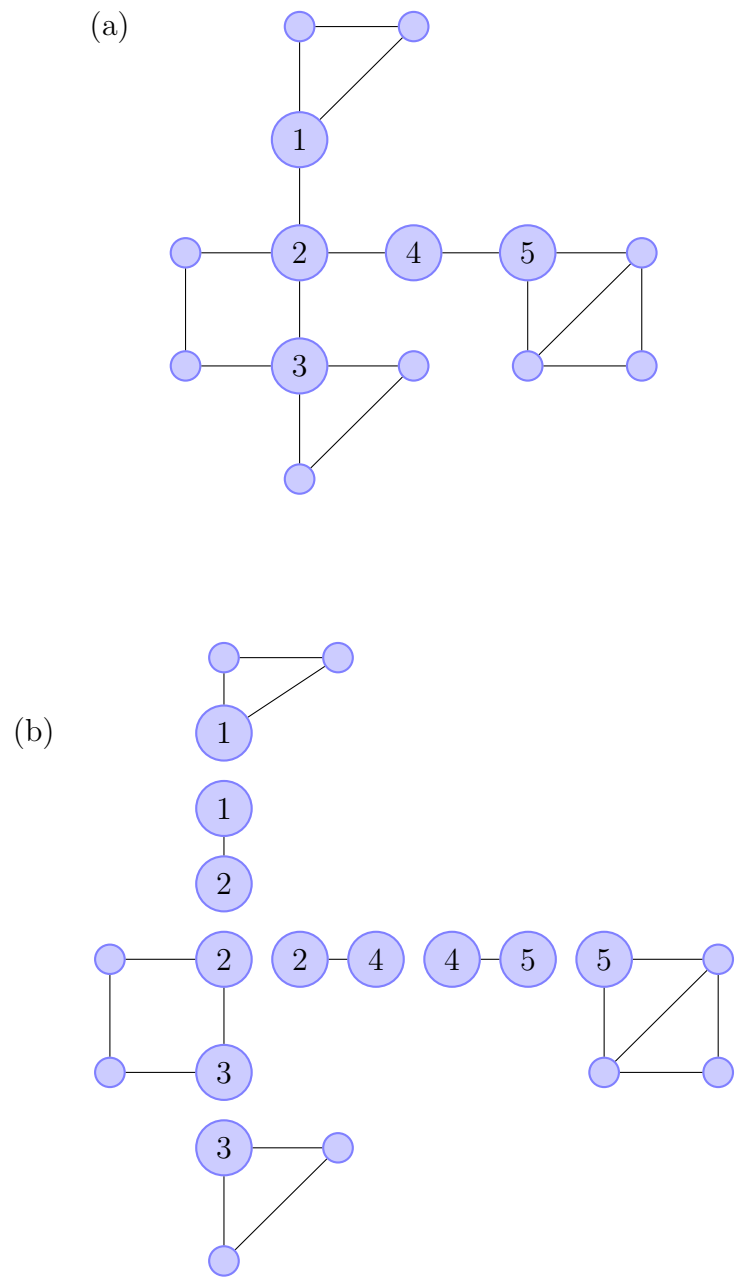
\begin{figure}
 	\begin{center}
		\begin{tikzpicture}
			\node[place] (leftsqtopleft) at (-3,0) {$$};
			\node[place] (leftsqtopright) at (-1.5,0) {$2$};
			\node[place] (leftsqbotleft) at (-3,-1.5) {$$};
			\node[place] (leftsqbotright) at (-1.5,-1.5) {$3$};
			\node[place] (lowertritopright) at (0,-1.5) {$$};
			\node[place] (lowertribot) at (-1.5,-3.0) {$$};
			\node[place] (centre) at (0,0) {$4$};	
			\node[place] (uppertribot) at (-1.5,1.5) {$1 $};
			\node[place] (uppertritop) at (-1.5,3) {$ $};
			\node[place] (uppertriright) at (0,3) {$$};				
			\node[place] (rightsqtopleft) at (1.5,0) {$5$};
			\node[place] (rightsqtopright) at (3,0) {$$};
			\node[place] (rightsqbotleft) at (1.5,-1.5) {$$};
			\node[place] (rightsqbotright) at (3,-1.5) {$$};			
			\node (a) at (-4,3) {(a)};
			\draw [-] (leftsqtopleft) -- (leftsqtopright);
			\draw [-] (leftsqtopleft) -- (leftsqbotleft);
			\draw [-] (leftsqbotleft) -- (leftsqbotright);
			\draw [-] (leftsqtopright) -- (leftsqbotright);			
			\draw [-] (leftsqtopright) -- (centre);
			\draw [-] (leftsqtopright) -- (uppertribot);
			\draw [-] (leftsqbotright) -- (lowertritopright);
			\draw [-] (leftsqbotright) -- (lowertribot);											
			\draw [-] (lowertritopright) -- (lowertribot);
			\draw [-] (uppertribot) -- (uppertritop);
			\draw [-] (uppertribot) -- (uppertriright);
			\draw [-] (uppertritop) -- (uppertriright);
			\draw [-] (centre) -- (rightsqtopleft);
			\draw [-] (rightsqtopleft) -- (rightsqtopright);
			\draw [-] (rightsqtopleft) -- (rightsqbotleft);
			\draw [-] (rightsqbotleft) -- (rightsqbotright);
			\draw [-] (rightsqbotleft) -- (rightsqtopright);
			\draw [-] (rightsqbotright) -- (rightsqtopright);								
		\end{tikzpicture}
	\end{center}	
		\vspace{1.0cm}
	\begin{center}	
		\begin{tikzpicture}
			\node[place] (leftsqtopleft) at (-3,0) {$$};
			\node[place] (leftsqtopright) at (-1.5,0) {$2$};
			\node[place] (leftsqbotleft) at (-3,-1.5) {$$};
			\node[place] (leftsqbotright) at (-1.5,-1.5) {$3$};
			\node[place] (fromsquarebotright) at (-1.5,-2.5) {$3$};			
			\node[place] (lowertritopright) at (0,-2.5) {$$};
			\node[place] (lowertribot) at (-1.5,-4.0) {$$};
			%
			%
			\node[place] (fromsquaretopright) at (-1.5, 1) {$2$};
			\node[place] (fromtribot) at (-1.5, 2) {$1$};							
			\node[place] (fromsquaretoprighttotheright) at (-0.5, 0) {$2$};
			\node[place] (fromsquaretoplefttotheleft) at (2.5, 0) {$5$};
			\node[place] (fromcentretoleft) at (0.5, 0) {$4$};
			\node[place] (fromcentretoright) at (1.5, 0) {$4$};
			\node[place] (uppertribot) at (-1.5,3) {$1$};
			\node[place] (uppertritop) at (-1.5,4) {$ $};
			\node[place] (uppertriright) at (0,4) {$$};				
			\node[place] (rightsqtopleft) at (3.5,0) {$5$};
			\node[place] (rightsqtopright) at (5,0) {$$};
			\node[place] (rightsqbotleft) at (3.5,-1.5) {$$};
			\node[place] (rightsqbotright) at (5,-1.5) {$$};			
			\node (a) at (-4,3) {(b)};
			\draw [-] (leftsqtopleft) -- (leftsqtopright);
			\draw [-] (leftsqtopleft) -- (leftsqbotleft);
			\draw [-] (leftsqbotleft) -- (leftsqbotright);
			\draw [-] (leftsqtopright) -- (leftsqbotright);			
			\draw [-] (fromsquarebotright) -- (lowertritopright);
			\draw [-] (fromsquarebotright) -- (lowertribot);											
			\draw [-] (lowertritopright) -- (lowertribot);
			\draw [-] (fromsquaretopright) -- (fromtribot);
			\draw [-] (uppertribot) -- (uppertritop);
			\draw [-] (uppertribot) -- (uppertriright);
			\draw [-] (uppertritop) -- (uppertriright);
			\draw [-] (fromsquaretoprighttotheright) -- (fromcentretoleft);
			\draw [-] (fromsquaretoplefttotheleft) -- (fromcentretoright);			
			\draw [-] (rightsqtopleft) -- (rightsqtopright);
			\draw [-] (rightsqtopleft) -- (rightsqbotleft);
			\draw [-] (rightsqbotleft) -- (rightsqbotright);
			\draw [-] (rightsqbotleft) -- (rightsqtopright);
			\draw [-] (rightsqbotright) -- (rightsqtopright);			
		\end{tikzpicture}
	\end{center}
	\caption{An example of a network with 5 cut-vertices (a), and the equivalent network upon the decomposition into subnetworks due to the removal of cut-vertices. The decomposed network has 4 subgraphs of different type: edge, triangle, cycle of size four and toast  with frequencies  3, 2, 1, 1, respectively. The five cut-vertices belong to $Ind(1)=2$, $Ind(2)=3$, $Ind(3)=2$, $Ind(4)=2$ and $Ind(5)=2$.}\label{CutVertDecomp}
\end{figure}			

%

\begin{table}
\begin{center}
\begin{tabular}{ | m{1.5cm} | m{2cm} | m{4cm} |}
  \hline
  &\textbf{Isolated} & \textbf{Part of a network} \\
  \hline  
  		\begin{tikzpicture}
			\node[placetab] (left) at ( -0.25,0) {$$};
			\node[placetab] (middle) at (0.25,0) {$$};
			\draw [-] (left) -- (middle);
		\end{tikzpicture}
   & 6 & 7  \\
  \hline
  		\begin{tikzpicture}
			\node[placetab] (leftbase) at (-0.25,0) {$$};
			\node[placetab] (rightbase) at (0.25,0) {$$};
			\node[placetab] (middletop) at (0,0.37) {$$};
			\draw [-] (leftbase) -- (rightbase);
			\draw [-] (rightbase) -- (middletop);			
			\draw [-] (leftbase) -- (middletop);					
		\end{tikzpicture}
   & 18 & 22  \\
  \hline
  			\begin{tikzpicture}
			\node[placetab] (lefttop) at ( -0.25,0.25) {$$};
			\node[placetab] (righttop) at ( 0.25,0.25) {$$};
			\node[placetab] (leftbottom) at ( -0.25,-0.25) {$$};
			\node[placetab] (rightbottom) at (0.25,-0.25) {$$};
			\draw [-] (lefttop) -- (righttop);
			\draw [-] (righttop) -- (rightbottom);
			\draw [-] (lefttop) -- (leftbottom);
			\draw [-] (leftbottom) -- (rightbottom);
		\end{tikzpicture}
  & 36 & 45  \\
  \hline
  			\begin{tikzpicture}
			\node[placetab] (lefttop) at ( -0.25,0.25) {$$};
			\node[placetab] (righttop) at ( 0.25,0.25) {$$};
			\node[placetab] (leftbottom) at ( -0.25,-0.25) {$$};
			\node[placetab] (rightbottom) at (0.25,-0.25) {$$};
			\draw [-] (lefttop) -- (righttop);
			\draw [-] (righttop) -- (rightbottom);
			\draw [-] (lefttop) -- (leftbottom);
			\draw [-] (leftbottom) -- (rightbottom);
			\draw [-] (lefttop) -- (rightbottom);			
		\end{tikzpicture}
   & 47 & 57  \\
  \hline
  			\begin{tikzpicture}
			\node[placetab] (lefttop) at ( -0.25,0.25) {$$};
			\node[placetab] (righttop) at ( 0.25,0.25) {$$};
			\node[placetab] (leftbottom) at ( -0.25,-0.25) {$$};
			\node[placetab] (rightbottom) at (0.25,-0.25) {$$};
			\draw [-] (lefttop) -- (righttop);
			\draw [-] (righttop) -- (rightbottom);
			\draw [-] (lefttop) -- (leftbottom);
			\draw [-] (leftbottom) -- (rightbottom);
			\draw [-] (lefttop) -- (rightbottom);			
			\draw [-] (righttop) -- (leftbottom);			
		\end{tikzpicture}
   & 58 & 69  \\
  \hline
\end{tabular}
\end{center}
  \caption{The number of equations for a class of subnetworks that are isolated \textit{versus} being part of a network.}
  \label{IsoOrPart}
\end{table}

\section{Discussion}
In this paper we extended results for tree-like networks and Markovian $SIR$ epidemics \cite{Sharkeyetal13Exact} to networks with loops, and provided an important link between the structural properties of the network and the feasibility of writing down an exact representation of the epidemic on the network. The results are built up in a methodical way starting from the simplest networks or network motifs, and to enhance clarity we give the full system of equations whenever this is possible. The proof of the main results is in fact an alternative to and more general than the proof provided in \cite{Sharkeyetal13Exact}, and its usefulness and generality in reducing the number of equations in the exact system is illustrated by our concrete, worked out examples for tree-like networks that contain non-overlapping (via edges) loops of maximum size 3. In future work, we will concentrate on improving the upper estimate on the number of equations in a reduced, exact system, as well as making the upper estimate more explicit. 

As the size of subnetworks increases so does the number of equations and at a much faster rate. However, for cycle graphs the exact system contains relatively few equations which we now briefly explain. In the case of a cycle graph with $N$ nodes, $2N$ equations for the nodes (each node can be $S$ or $I$) and $2N$ equations for the edges (each edge can be in one of the two states $SI$ or $IS$) are needed. In the differential equations for the edges two types of triples occur, namely $SSI$ (there are $2N$ of these) and $ISI$ triples (there are $N$ of these), hence there are a total of $3N$ equations for the triples. Similarly, there are $3N$ $k$-motifs (for $k=4,5,\ldots, N-1$), namely those of type $S\ldots SI$ and those of type $IS\ldots SI$. Finally, there are a total of $2N$ $N$-motifs, $N$ of them are of type $S\ldots SI$ and $N$ of them are of type $IS\ldots SI$. Therefore, the number of equations in the full system of a cycle graph with $N$ nodes is $2N+2N+3N(N-3)+2N=3N(N-1)$. It is worth noting that this system can be lumped to $2N-1$ equations by introducing a single variable for each motif type. Namely, the first lumped variable will be $\la S \ra=\sum_{i=1}^N \la S_i \ra$ the expected number of $\la S \ra$ nodes. Similarly, the second lumped variable is $\la I \ra=\sum_{i=1}^N \la I_i \ra$ the expected number of $I$ nodes. Then in a similar way there will be a lumped variable for $k$-motifs of type $S\ldots SI$ and another lumped variable for $k$-motifs of type $IS\ldots SI$. In total the number of lumped variables will sum up to $2N-1$, meaning that the exact value of the prevalence can be given by solving a system of $2N-1$ ODEs. Hence, it is feasible to extend our results to tree-like networks with no-overlapping loops of size greater than three.

While progress in modelling epidemic dynamics on networks with loops (which usually involves clustering) has been made \cite{Ball, GleesonClusteredNetworks, Newman2009RanNetwClust, Trapman, VolzPlosOne}, many challenges remain. These challenges are both around generating clustered networks, and tuning the amount of clustering and implicitly the number and type of different loops \cite{Ball, KissComNewman, Newman2003ClustNetwGeneration, Trapman, VolzTuneDegreeClust}, and especially around providing a description of the time-evolution of the epidemic. Progress in determining the final epidemic size (time-evolution not needed) has been good but models describing the time-evolution are more challenging. In this paper, we make the first steps in providing a well-grounded and rigorous modelling alternative. Although the models presented are unlikely to pertain to analytical analysis, they could provide a valuable platform to investigate the effect of intervention or control on nodes, edges or subparts of the network. The exact system, in this case, could give precise information about the impact of isolating nodes, links or decreasing their potential of transmitting, or increasing the recovery rate of some targeted or specific nodes.

\section*{Acknowledgements}
P\'eter L. Simon acknowledges support from OTKA (grant no. 81403).
\section{Appendix: The triangle network}
\label{TriangleEqs}
The triangle network is a loop of three nodes numbered 1, 2 and 3, see Fig.~\ref{lollipop-bowtie}b. The system dynamics are given by the following set of equations:
\begin{align}
 \dot{\langle I_1 \rangle} &= \tau \langle S_1I_2 \rangle + \tau \langle S_1I_3 \rangle - \gamma \langle I_1\rangle , \\ \label{triangle1}
 \dot{\langle S_1 \rangle} &= -\tau \langle S_1I_2 \rangle - \tau \langle S_1I_3 \rangle,\\
 \dot{\langle I_2 \rangle} &= \tau \langle I_1S_2 \rangle + \tau \langle S_2I_3\rangle - \gamma \langle I_2\rangle ,\\
 \dot{\langle S_2 \rangle} &= -\tau \langle I_1S_2 \rangle - \tau \langle S_2I_3\rangle , \\
 \dot{\langle I_3 \rangle} &= \tau \langle I_2S_3 \rangle + \tau \langle I_1S_3 \rangle - \gamma \langle I_3 \rangle,\\
 \dot{\langle S_3 \rangle} &= -\tau \langle I_2S_3 \rangle - \tau \langle I_1S_3\rangle ,\\
 \dot{\langle I_1S_2 \rangle} &= -(\tau + \gamma) \langle I_1S_2 \rangle - \tau \langle I_1S_2I_3 \rangle + \tau \langle S_1S_2I_3 \rangle,\\
 \dot{\langle S_1I_2 \rangle} &= -(\tau + \gamma) \langle S_1I_2 \rangle + \tau \langle S_1S_2I_3 \rangle - \tau \langle S_1I_2I_3 \rangle,\\
 \dot{\langle I_2S_3 \rangle} &= -(\tau + \gamma) \langle I_2S_3 \rangle + \tau \langle I_1S_2S_3 \rangle - \tau \langle I_1I_2S_3 \rangle ,\\
 \dot{\langle S_2I_3 \rangle} &= -(\tau + \gamma) \langle S_2I_3 \rangle - \tau \langle I_1S_2I_3 \rangle + \tau \langle I_1S_2S_3 \rangle ,\\
 \dot{\langle I_1S_3 \rangle} &= -(\tau + \gamma) \langle I_1S_3 \rangle - \tau \langle I_1I_2S_3 \rangle + \tau \langle S_1I_2S_3 \rangle ,\\
 \dot{\langle S_1I_3 \rangle} &= -(\tau + \gamma) \langle S_1I_3 \rangle - \tau \langle S_1I_2I_3 \rangle + \tau \langle S_1I_2S_3 \rangle , \\
 \dot{\langle I_1S_2I_3 \rangle} &= -2(\tau + \gamma) \langle I_1S_2I_3 \rangle + \tau \langle I_1S_2S_3 \rangle + \tau \langle S_1S_2I_3 \rangle ,\\
 \dot{\langle S_1I_2I_3 \rangle} &= -2(\tau + \gamma) \langle S_1I_2I_3 \rangle + \tau \langle S_1S_2I_3 \rangle + \tau \langle S_1I_2S_3 \rangle , \\
 \dot{\langle I_1I_2S_3 \rangle} &= -2(\tau + \gamma) \langle I_1I_2S_3 \rangle + \tau \langle I_1S_2S_3 \rangle + \tau \langle S_1I_2S_3 \rangle ,\\
 \dot{\langle S_1I_2S_3 \rangle} &= -(2\tau + \gamma)\langle S_1I_2S_3 \rangle , \\
 \dot{\langle S_1S_2I_3 \rangle} &= -(2\tau + \gamma) \langle S_1S_2I_3 \rangle ,\\
 \dot{\langle I_1S_2S_3 \rangle} &= -(2\tau + \gamma) \langle I_1S_2S_3 \rangle. \label{triangle2}
\end{align}

\noindent Since this network has no cut-vertices, closures are not possible for any of the subsystems above. For instance,
\[
\la S_1S_2I_3\ra = \frac{\langle I_1S_2 \rangle \langle S_2 I_3 \rangle}{\langle S_2 \rangle}, \langle I_1S_2I_3 \rangle = \frac{\langle I_1S_2 \rangle \langle S_2 I_3 \rangle}{\langle S_2 \rangle},
\]
are closures that do not hold, see Fig.~\ref{BadCloTri}. We note that the evaluation of one of the closures above (the first) requires an extra equation for $\la S_1S_2\ra$. This is given below,
\[
\la \dot{S_1S_2\ra} = -2\tau \la S_1S_2I_3\ra.
\]

Depending on the closures that we wish to test additional equations may be needed.
\section{Appendix: Equations for the lollipop network}
\label{LolEqs}
The lollipop network we consider has nodes numbered as shown in Fig.~\ref{lollipop-bowtie}c.
The equations describing the $SIR$ model can be formulated as follows:
\begin{align}
\dot{\langle I_1 \rangle}  &= \tau \langle S_1I_2 \rangle + \tau \langle S_1I_3 \rangle + \tau \langle S_1I_4 \rangle - \gamma \langle I_1 \rangle, \label{cannotbeclosed_first}\\
\dot{ \langle S_1 \rangle} &= -\tau \langle S_1I_2 \rangle - \tau \langle S_1I_3 \rangle - \tau \langle S_1I_4 \rangle,\\
\dot{\langle I_2 \rangle} &= \tau \langle I_1S_2 \rangle - \gamma \langle I_2 \rangle,\\
\dot{ \langle S_2 \rangle} &= -\tau \langle I_1S_2 \rangle, \\
\dot{\langle I_3 \rangle} &= \tau \langle I_1S_3 \rangle + \tau \langle S_3I_4 \rangle - \gamma \langle I_3 \rangle,\\
\dot{\langle S_3 \rangle} &= -\tau \langle I_1S_3 \rangle - \tau  \langle S_3I_4 \rangle,\\
\dot{\langle I_4 \rangle} &= \tau \langle I_1S_4 \rangle + \tau \langle I_3S_4 \rangle - \gamma \langle I_4 \rangle, \\
\dot{\langle S_4 \rangle} &= -\tau \langle I_1S_4 \rangle- \tau \langle I_3S_4 \rangle, \\
\dot{\langle I_1S_2 \rangle} &= -(\tau + \gamma) \langle I_1S_2 \rangle + \tau \langle S_1S_2I_3 \rangle + \tau \langle S_1S_2I_4 \rangle,\\
\dot{\langle S_1I_2 \rangle} &= -(\tau + \gamma) \langle S_1I_2 \rangle - \tau \langle S_1I_2I_3 \rangle - \tau  \langle S_1I_2I_4 \rangle,\\
\dot{\langle I_1S_3 \rangle} &= -(\tau + \gamma) \langle I_1S_3 \rangle + \tau \langle S_1I_2S_3 \rangle + \tau \langle S_1S_3I_4 \rangle - \tau \langle I_1S_3I_4 \rangle,\\
\dot{\langle S_1I_3 \rangle} &= -(\tau + \gamma) \langle S_1I_3 \rangle - \tau \langle S_1I_2I_3 \rangle - \tau \langle S_1I_3I_4 \rangle+\tau \langle S_1S_3I_4 \rangle,\\
\dot{\langle I_1S_4 \rangle} &= -(\tau + \gamma) \langle I_1S_4 \rangle + \tau \langle S_1I_2S_4 \rangle + \tau \langle S_1I_3S_4 \rangle - \tau \langle I_1I_3S_4 \rangle, \\
\dot{\langle S_1I_4 \rangle} &= -(\tau + \gamma) \langle S_1I_4 \rangle - \tau \langle S_1I_2I_4 \rangle - \tau \langle S_1I_3I_4 \rangle + \tau \langle S_1I_3S_4 \rangle, \\
\dot{\langle S_3I_4 \rangle} &= -(\tau + \gamma) \langle S_3I_4 \rangle + \tau \langle I_1S_3S_4 \rangle - \tau \langle I_1S_3I_4 \rangle,\\
 \dot{\langle I_3S_4 \rangle} &= -(\tau + \gamma) \langle I_3S_4 \rangle + \tau \langle I_1S_3S_4 \rangle - \tau \langle I_1I_3S_4 \rangle,\\
\dot{\langle S_1I_3I_4 \rangle} &= -2(\tau + \gamma) \langle S_1I_3I_4 \rangle + \tau \langle S_1S_3I_4 \rangle +\tau \langle S_1I_3S_4 \rangle - \tau \langle S_1I_2I_3I_4 \rangle,\\
\dot{\langle S_1S_3I_4 \rangle} &= -(2\tau + \gamma) \langle S_1S_3I_4 \rangle - \tau \langle S_1I_2S_3I_4 \rangle, \\
\dot{\langle S_1I_3S_4 \rangle} &= -(2\tau + \gamma) \langle S_1I_3S_4 \rangle - \tau \langle S_1I_2I_3S_4 \rangle, \\
\dot{\langle I_1S_3I_4 \rangle} &= -2(\tau + \gamma) \langle I_1S_3I_4 \rangle + \tau \langle S_1S_3I_4 \rangle + \tau \langle I_1S_3S_4 \rangle + \tau \langle S_1I_2S_3I_4 
\rangle, 
\\
\dot{\langle I_1I_3S_4 \rangle} &= -2(\tau + \gamma) \langle I_1I_3S_4 \rangle + \tau \langle S_1I_3S_4 \rangle + \tau \langle I_1S_3S_4 \rangle + \tau \langle S_1I_2I_3S_4 
\rangle, \\
\dot{\langle I_1S_3S_4 \rangle} &= -(2\tau + \gamma) \langle I_1S_3S_4 \rangle + \tau \langle S_1I_2S_3S_4 \rangle.\label{cannotbeclosed_last}
\end{align}
This first group of equations consist of variables (e.g. configurations of states and subgraphs) which cannot be closed or further reduced.
Naturally, this first set requires equations at the levels of triples and quadruples or full system size. Note that triples which are part of the triangle cannot be closed.
However, the second group of equations given below,

\begin{align}
\dot{\langle S_1I_2I_4 \rangle} &= -2(\tau + \gamma) \langle S_1I_2I_4 \rangle + \tau \langle S_1I_2I_3S_4 \rangle- \tau \langle S_1I_2I_3I_4 \rangle, \label{canbeclosed_first}\\
\dot{\langle S_1I_2I_3 \rangle} &= -2(\tau + \gamma) \langle S_1I_2I_3 \rangle + \tau \langle S_1I_2S_3I_4 \rangle- \tau \langle S_1I_2I_3I_4 \rangle,\\
\dot{\langle S_1S_2I_3 \rangle} &= -(\tau + \gamma) \langle S_1S_2I_3 \rangle + \tau \langle S_1S_2S_3I_4 \rangle - \tau \langle S_1S_2I_3I_4 \rangle,\\
\dot{\langle S_1S_2I_4 \rangle} &= -(\tau + \gamma) \langle S_1S_2I_4 \rangle + \tau \langle S_1S_2I_3S_4 \rangle - \tau \langle S_1S_2I_3I_4 \rangle,\\
\dot{\langle S_1I_2S_3 \rangle} &= -(\tau + \gamma) \langle S_1I_2S_3 \rangle - 2\tau \langle S_1I_2S_3I_4 \rangle, \\
\dot{\langle S_1I_2S_4 \rangle} &= -(\tau + \gamma) \langle S_1S_3I_4 \rangle - 2\tau \langle S_1I_2I_3S_4 \rangle, \\
\dot{\langle S_1I_2I_3I_4 \rangle} &= -3(\tau + \gamma) \langle S_1I_2I_3I_4 \rangle + \tau ( \langle S_1I_2I_3S_4 \rangle + \langle S_1I_2S_3I_4 \rangle ), \\
\dot{\langle S_1I_2I_3S_4 \rangle} &= -(3\tau + 2\gamma) \langle S_1I_2I_3S_4 \rangle, \\
\dot{\langle S_1I_2S_3I_4 \rangle} &= -(3\tau + 2\gamma) \langle S_1I_2S_3I_4 \rangle, \\
\dot{ \langle S_1S_2I_3I_4 \rangle} &= -2(\tau + \gamma) \langle S_1S_2I_3I_4 \rangle + \tau \langle S_1S_2S_3I_4 \rangle +\tau \langle S_1S_2I_3S_4 \rangle, \\
\dot{\langle S_1S_2I_3S_4 \rangle} &= -(2\tau + \gamma) \langle S_1S_2I_3S_4 \rangle, \\
\dot{\langle S_1S_2S_3I_4 \rangle} &= -(2\tau + \gamma) \langle S_1S_2S_3I_4 \rangle, \\
\dot{\langle S_1I_2S_3S_4) \rangle} &= -(\tau + \gamma) \langle S_1I_2S_3S_4 \rangle, \label{canbeclosed_last}
\end{align}

can be closed by using the following closures,

\begin{align}
\langle S_1I_2I_4 \rangle \langle S_1 \rangle &= \langle S_1I_2 \rangle \langle S_1 I_4 \rangle ,\label{closure_first}\\
\langle S_1I_2I_3 \rangle  \langle S_1 \rangle &=  \langle S_1I_2\rangle \langle S_1 I_3 \rangle,\\
\langle S_1S_2I_3\rangle  \langle S_1 \rangle &=  \langle S_1S_2\rangle \langle S_1 I_3\rangle ,\\
\langle S_1S_2I_4\rangle  \langle S_1 \rangle &=  \langle S_1S_2\rangle \langle S_1 I_4\rangle ,\\
\langle S_1I_2S_3\rangle  \langle S_1 \rangle &=  \langle S_1I_2\rangle \langle S_1 S_3 \rangle ,\\
\langle S_1I_2S_4 \rangle \langle S_1 \rangle &=  \langle S_1I_2\rangle \langle S_1 S_4\rangle ,\\
\langle S_1I_2I_3I_4\rangle \langle S_1 \rangle &=  \langle S_1I_2\rangle \langle S_1 I_3 I_4 \rangle ,\\
\langle S_1I_2I_3S_4 \rangle \langle S_1 \rangle &= \langle S_1I_2\rangle \langle S_1 I_3 S_4\rangle ,\\
\langle S_1I_2S_3I_4 \rangle \langle S_1 \rangle &= \langle S_1I_2\rangle  \langle S_1 S_3 I_4\rangle ,\\
\langle S_1S_2I_3I_4 \rangle \langle S_1 \rangle &= \langle S_1S_2\rangle \langle S_1 I_3 I_4 \rangle ,\\
\langle S_1S_2I_3S_4 \rangle \langle S_1 \rangle &= \langle S_1S_2\rangle \langle S_1 I_3 S_4 \rangle ,\\
\langle S_1S_2S_3I_4 \rangle \langle S_1 \rangle &= \langle S_1S_2\rangle \langle S_1 S_3 I_4 \rangle ,\\
\langle S_1I_2S_3S_4\rangle  \langle S_1 \rangle &=\langle S_1I_2\rangle \langle S_1 S_3 S_4 \rangle . \label{closure_last}
\end{align}

We note that there are two distinct types of closures. Namely, closures at the level of triples that are not part of the triangle and closures at the full system size.
To complete the closed system we need the following extra equations for variables that are required by the closures. These new variables together with their equations are,

\begin{align}
 \dot{\langle S_1 S_2 \rangle } &= -\tau \langle S_1S_2 I_3 \rangle  - \tau \langle S_1S_2 I_4 \rangle \label{extra_by_closure_first},\\
 \dot{\langle S_1 S_3 \rangle } &= -\tau \langle S_1I_2 S_3 \rangle  - 2\tau \langle S_1S_3 I_4 \rangle ,\\
 \dot{\langle S_1 S_4 \rangle } &= -\tau \langle S_1I_2S_4 \rangle  - 2\tau \langle S_1I_3 S_4 \rangle ,\\
 \dot{\langle S_1 S_3 S_4 \rangle } &= -\tau \langle S_1I_2 S_3 S_4 \rangle .\label{extra_by_closure_last}
\end{align}

By substituting the closures given in Eqs.~(\ref{closure_first}-\ref{closure_last}) into Eqs.~(\ref{canbeclosed_first}-\ref{canbeclosed_last}) together with the set of equations that 
cannot be closed, Eqs.~(\ref{cannotbeclosed_first}-\ref{cannotbeclosed_last}), and the extra variables, induced by the closures, Eqs.~(\ref{extra_by_closure_first}-
\ref{extra_by_closure_last}), will result in a system of 26 differential equations describing the system dynamics completely. Without closures, the system is fully specified by 35 
equations. Strictly speaking, we can drop the equations for $\la S_i \ra$ if we are only interested in prevalence and then the equations in the full and reduced system drop to 31 and 
23. Note that in the full system all $\la S_i \ra$s can be dropped, while in the reduced system we cannot drop $\la S_1 \ra$ as the closures rely on it.

\section{Appendix: Equations for toast network}
\label{ToastEqs}
The evolution equations on the toast network labeled as in Fig.~\ref{lollipop-bowtie}d are given by
%
%
%
%
%
\begin{align}
\dot{\langle I_1 \rangle } &= \tau \langle S_1I_2 \rangle  + \tau \langle S_1I_3 \rangle  + \tau \langle S_1I_4 \rangle  - \gamma \langle I_1 \rangle ,\\
\dot{\langle S_1 \rangle } &= -\tau \langle S_1I_2 \rangle  - \tau \langle S_1I_3 \rangle  - \tau \langle S_1I_4 \rangle ,\\
\dot{ \langle I_2 \rangle } &= \tau \langle I_1S_2 \rangle  +\tau \langle S_2I_3 \rangle   - \gamma \langle I_2 \rangle ,\\
\dot{\langle S_2 \rangle } &= -\tau \langle I_1S_2 \rangle  -\tau \langle S_2I_3 \rangle ,\\
\dot{\langle I_3 \rangle } &= \tau \langle I_1S_3 \rangle  + \tau \langle S_3I_4 \rangle  + \tau \langle I_2S_3 \rangle  - \gamma \langle I_3 \rangle ,\\
\dot{\langle S_3 \rangle } &= -\tau \langle I_1S_3 \rangle  - \tau \langle S_3I_4 \rangle  - \tau \langle I_2S_3 \rangle ,\\
\dot{\langle I_4 \rangle } &= \tau \langle I_1S_4 \rangle  + \tau \langle I_3S_4 \rangle  - \gamma \langle  I_4 \rangle , \\
\dot{\langle S_4 \rangle } &= -\tau \langle I_1S_4 \rangle - \tau \langle I_3S_4 \rangle , \\
\dot{\langle I_1S_2 \rangle } &= -(\tau + \gamma) \langle I_1S_2 \rangle  + \tau \langle S_1S_2I_3 \rangle  + \tau \langle S_1S_2I_4 \rangle  - \tau \langle I_1S_2I_3\rangle ,\\
\dot{\langle S_1I_2 \rangle } &= -(\tau + \gamma) \langle S_1I_2 \rangle  + \tau \langle S_1S_2I_3 \rangle  - \tau \langle S_1I_2I_3 \rangle  - \tau \langle S_1I_2I_4\rangle ,\\
\dot{\langle I_1S_3 \rangle } &= -(\tau + \gamma) \langle I_1S_3 \rangle  + \tau \langle S_1I_2S_3 \rangle  + \tau \langle S_1S_3I_4 \rangle  - \tau \langle I_1S_3I_4 \rangle  - \tau 
\langle I_1I_2S_3 \rangle ,\\
\dot{\langle S_1I_3 \rangle } &= -(\tau + \gamma) \langle S_1I_3 \rangle  - \tau \langle S_1I_2I_3 \rangle  - \tau \langle S_1I_3I_4 \rangle +\tau \langle S_1S_3I_4 \rangle  + \tau 
\langle S_1I_2S_3\rangle ,\\
\dot{\langle I_1S_4 \rangle } &= -(\tau + \gamma) \langle I_1S_4 \rangle + \tau \langle S_1I_2S_4 \rangle  + \tau \langle S_1I_3S_4 \rangle  - \tau \langle I_1I_3S_4 \rangle , \\
\dot{\langle S_1I_4 \rangle } &= -(\tau + \gamma) \langle S_1I_4 \rangle  - \tau \langle S_1I_2I_4 \rangle  - \tau \langle S_1I_3I_4 \rangle  + \tau \langle S_1I_3S_4 \rangle , \\
\dot{\langle S_3I_4 \rangle } &= -(\tau + \gamma) \langle S_3I_4 \rangle  + \tau \langle I_1S_3S_4 \rangle  - \tau \langle I_1S_3I_4\rangle  - \tau \langle I_2S_3I_4 \rangle ,\\
\dot{\langle I_3S_4 \rangle } &= -(\tau + \gamma) \langle I_3S_4 \rangle  + \tau \langle I_1S_3S_4\rangle  - \tau \langle I_1I_3S_4 \rangle  + \tau \langle I_2S_3S_4 \rangle ,\\
\dot{\langle S_2I_3 \rangle } &= -(\tau + \gamma) \langle S_2I_3 \rangle  + \tau \langle I_1S_2S_3 \rangle  - \tau \langle I_1S_2I_3 \rangle  + \tau \langle S_2S_3I_4 \rangle ,\\
\dot{\langle I_2S_3\rangle } &= -(\tau + \gamma) \langle I_2S_3 \rangle  + \tau \langle I_1S_2S_3 \rangle  - \tau \langle I_1I_2S_3 \rangle  - \tau \langle I_2S_3I_4\rangle ,\\
\dot{\langle S_1I_2I_4 \rangle } &= -2(\tau + \gamma) \langle S_1I_2I_4 \rangle  + \tau \langle S_1I_2I_3S_4 \rangle - \tau \langle S_1I_2I_3I_4 \rangle  + \tau \langle 
S_1S_2I_3I_4\rangle ,\\
\dot{\langle S_1I_2I_3 \rangle } &= -2(\tau + \gamma) \langle S_1I_2I_3 \rangle  + \tau \langle S_1I_2S_3I_4 \rangle - \tau \langle S_1I_2I_3I_4 \rangle  + \tau \langle S_1I_2S_3 
\rangle  + \tau \langle S_1S_2I_3 \rangle ,\\
\dot{\langle S_1I_3I_4 \rangle } &= -2(\tau + \gamma)\langle S_1I_3I_4\rangle  + \tau \langle S_1S_3I_4\rangle  +\tau \langle S_1I_3S_4 \rangle  - \tau \langle S_1I_2I_3I_4 \rangle  
+ \tau \langle S_1I_2S_3I_4\rangle ,\\
\dot{ \langle S_1S_2I_3 \rangle} &= -(2\tau + \gamma)  \langle S_1S_2I_3  \rangle + \tau  \langle S_1S_2S_3I_4  \rangle - \tau  \langle S_1S_2I_3I_4  \rangle,\\
\dot{ \langle S_1S_2I_4  \rangle} &= -(\tau + \gamma)  \langle S_1S_2I_4  \rangle + \tau \langle S_1S_2I_3S_4  \rangle - 2\tau  \langle S_1S_2I_3I_4 \rangle,\\
\dot{ \langle S_1I_2S_3  \rangle} &= -(2\tau + \gamma)  \langle S_1I_2S_3  \rangle - 2\tau  \langle S_1I_2S_3I_4  \rangle, \\
\dot{ \langle S_1S_3I_4  \rangle} &= -(2\tau + \gamma)  \langle S_1S_3I_4  \rangle - 2\tau  \langle S_1I_2S_3I_4  \rangle, \\
\dot{ \langle I_1S_3I_4  \rangle} &= -2(\tau + \gamma)  \langle I_1S_3I_4  \rangle + \tau  \langle S_1I_2S_3I_4  \rangle + \tau  \langle S_1S_3I_4  \rangle + \tau  \langle I_1S_3S_4  
\rangle - \tau  \langle I_1I_2S_3I_4  \rangle,\\
\dot{ \langle S_1I_2S_4  \rangle} &= -(\tau + \gamma)  \langle S_1I_2S_4  \rangle - 2\tau  \langle S_1I_2I_3S_4  \rangle +\tau  \langle S_1S_2I_3S_4  \rangle, \\
\dot{ \langle S_1I_3S_4  \rangle} &= -(2\tau + \gamma)  \langle S_1I_3S_4  \rangle - \tau  \langle S_1I_2I_3S_4  \rangle + \tau  \langle S_1I_2S_3S_4  \rangle, \\
\dot{ \langle I_1I_3S_4  \rangle} &= -2(\tau + \gamma)  \langle I_1I_3S_4  \rangle + \tau  \langle S_1I_3S_4  \rangle + \tau  \langle I_1S_3S_4  \rangle + \tau  \langle S_1I_2I_3S_4  
\rangle + \tau  \langle I_1I_2S_3S_4  \rangle,\\
\dot{ \langle I_1S_3S_4  \rangle} &= -(2\tau + \gamma)  \langle I_1S_3S_4  \rangle + \tau  \langle S_1I_2S_3S_4  \rangle - \tau  \langle I_1I_2S_3S_4  \rangle \\
\dot{ \langle I_1S_2I_3  \rangle} &= -2(\tau + \gamma)  \langle I_1S_2I_3  \rangle + \tau  \langle S_1S_2I_3I_4  \rangle + \tau  \langle I_1S_2S_3I_4  \rangle +\tau \la S_1S_2I_3\ra+\tau \la I_1S_2S_3 \ra, \\
\dot{ \langle I_1I_2S_3  \rangle} &= -2(\tau + \gamma)  \langle I_1I_2S_3  \rangle + \tau  \langle S_1I_2S_3I_4  \rangle + \tau  \langle I_1I_2S_3I_4  \rangle +\tau \la S_1I_2S_3 \ra+\tau \la I_1S_2S_3\ra, \\
\dot{\langle I_2S_3I_4 \rangle} &= -2(\tau + \gamma) \langle I_2S_3I_4 \rangle + \tau \langle I_1S_2S_3I_4 \rangle + \tau \langle I_1I_2S_3S_4 \rangle - \tau \langle I_1I_2S_3I_4 
\rangle, \\
\dot{\langle I_2S_3S_4 \rangle} &= -(\tau + \gamma) \langle I_2S_3S_4 \rangle + \tau \langle I_1S_2S_3S_4 \rangle  - 2\tau \langle I_1I_2S_3S_4 \rangle,\\
\dot{\langle S_2S_3I_4 \rangle} &= -(\tau + \gamma)\langle S_2S_3I_4 \rangle + \tau \langle I_1S_2S_3S_4 \rangle - 2\tau \langle I_1S_2S_3I_4 \rangle, \\
\dot{\langle I_1S_2S_3 \rangle} &= -(2\tau + \gamma) \langle I_1S_2S_3 \rangle + \tau \langle S_1S_2S_3I_4 \rangle - \tau \langle I_1S_2S_3I_4 \rangle, \\
\dot{\langle S_1I_2I_3I_4 \rangle} &= -3(\tau + \gamma) \langle S_1I_2I_3I_4 \rangle + \tau ( \langle S_1I_2I_3S_4 \rangle + 2\langle S_1I_2S_3I_4 \rangle + \langle S_1S_2I_3I_4 
\rangle ), \\
\dot{\langle S_1I_2I_3S_4 \rangle} &= -(3\tau + 2\gamma) \langle S_1I_2I_3S_4 \rangle + \tau (\langle S_1S_2I_3S_4 \rangle +\langle S_1I_2S_3S_4 \rangle ), \\
\dot{\langle S_1I_2S_3I_4 \rangle} &= -(4\tau + 2\gamma) \langle S_1I_2S_3I_4 \rangle, \\
\dot{\langle S_1S_2I_3I_4 \rangle} &= -(3\tau + 2\gamma) \langle S_1S_2I_3I_4 \rangle + \tau \langle S_1S_2S_3I_4 \rangle +\tau \langle S_1S_2I_3S_4 \rangle, \\
\dot{\langle S_1S_2I_3S_4 \rangle} &= -(3\tau + \gamma) \langle S_1S_2I_3S_4 \rangle,\\
\dot{\langle S_1S_2S_3I_4 \rangle} &= -(2\tau + \gamma) \langle S_1S_2S_3I_4 \rangle, \\
\dot{\langle S_1I_2S_3S_4 \rangle} &= -(2\tau + \gamma) \langle S_1I_2S_3S_4 \rangle, \\
\dot{\langle I_1S_2S_3S_4 \rangle} &= -(3\tau + \gamma) \langle I_1S_2S_3S_4 \rangle,\\
\dot{\langle I_1S_2S_3I_4 \rangle} &= -(3\tau + 2\gamma) \langle I_1S_2S_3I_4 \rangle +\tau \langle I_1S_2S_3S_4 \rangle + \tau \langle S_1S_2S_3I_4 \rangle, \\
\dot{\langle I_1I_2S_3S_4 \rangle} &= -(3\tau + 2\gamma) \langle I_1I_2S_3S_4 \rangle +\tau \langle I_1S_2S_3S_4 \rangle + \tau \langle S_1I_2S_3S_4 \rangle, \\
\dot{\langle I_1I_2S_3I_4 \rangle} &= -3(\tau + \gamma) \langle I_1I_2S_3I_4 \rangle +\tau \langle I_1S_2S_3I_4 \rangle + \tau \langle I_1I_2S_3S_4 \rangle + 2\tau \langle 
S_1I_2S_3I_4 \rangle.
\end{align}

\section{Appendix: Equations for the star-triangle network}
\label{StTriEqs}
In this section we write down the system of differential equations that are an exact representation of the $SIR$ epidemic on the star triangle-network, see Fig.~\ref{lollipop-bowtie}g. The relevant equations are:
\begin{align}
\dot{\langle S_1 \rangle} &= - \tau \sum_{j=1}^M\sum_{k=1}^2 \langle S_1I_{j_k} \rangle,\\
\dot{ \langle I_{1} \rangle} &= +  \tau \sum_{j=1}^M\sum_{k=1}^2 \langle S_1I_{j_k} \rangle -\gamma \la I_1 \ra,\\
\dot{ \langle S_{i_1} \rangle} &= -  \tau  \langle I_1S_{i_1} \rangle - \tau  \langle S_{i_1}I_{i_2} \rangle,\label{st1}\\
\dot{ \langle I_{i_1}  \rangle} &=    \tau  \langle I_1S_{i_1} \rangle + \tau  \langle S_{i_1}I_{i_2} \rangle - \gamma  \langle I_{i_1} \rangle,\\
\dot{ \langle S_{i_2} \rangle} &= -   \tau  \langle I_1S_{i_2} \rangle - \tau  \langle I_{i_1}S_{i_2} \rangle,\\
\dot{ \langle I_{i_2} \rangle} &=    \tau  \langle I_1S_{i_2}  \rangle + \tau  \langle I_{i_1}S_{i_2} \rangle - \gamma   \langle I_{i_2}  \rangle,\\
\dot{ \langle S_{i_1}I_{i_2} \rangle} &= -(\tau+\gamma)  \langle S_{i_1}I_{i_2} \rangle +  \tau  \langle I_1S_{i_1}S_{i_2}  \rangle - \tau  \langle I_1S_{i_1}I_{i_2} \rangle, \\
\dot{ \langle I_{i_1}S_{i_2} \rangle} &= -(\tau+\gamma) \langle I_{i_1}S_{i_2} \rangle +  \tau  \langle I_1S_{i_1}S_{i_2} \rangle - \tau  \langle I_1I_{i_1}S_{i_2} \rangle, \\
\dot{ \langle I_1S_{i_1} \rangle} &= -(\tau+\gamma) \langle I_1S_{i_1} \rangle +  \tau  \langle S_1S_{i_1}I_{i_2} \rangle - \tau  \langle I_1S_{i_1}I_{i_2} \rangle + \tau \sum_{j=1, j \neq i}^M\sum_{k=1}^2\frac{ \langle S_1S_{i_1} \rangle \langle S_1I_{j_k} \rangle}{ \langle S_1 \rangle}, \label{ex1} \\
\dot{ \langle I_1S_{i_2} \rangle} &= -(\tau+\gamma) \langle I_1S_{i_2} \rangle +  \tau  \langle S_1I_{i_1}S_{i_2} \rangle - \tau  \langle I_1I_{i_1}S_{i_2} \rangle +  \tau \sum_{j=1, j \neq i}^M\sum_{k=1}^2\frac{ \langle S_1S_{i_2} \rangle \langle S_1I_{j_k} \rangle}{ \langle S_1 \rangle},  \\
\dot{ \langle S_1 I_{i_1} \rangle} &= - (\tau+\gamma)  \langle S_1I_{i_1} \rangle -\tau \langle S_1 I_{i_1} I_{i_2} \rangle + \tau  \langle S_1 S_{i_1} I_{i_2} \rangle - \tau \sum_{j=1, j 
\neq i}^M\sum_{k=1}^2\frac{ \langle S_1 I_{i_1} \rangle  \langle S_1 I_{j_k} \rangle}{ \langle S_1 \rangle},\\
\dot{ \langle S_1 I_{i_2} \rangle} &= - (\tau+\gamma)  \langle S_1I_{i_2} \rangle - \tau \langle S_1 I_{i_1} I_{i_2} \rangle + \tau  \langle S_1 I_{i_1} S_{i_2} \rangle -  \tau \sum_{j=1, j 
\neq i}^M\sum_{k=1}^2\frac{ \langle S_1I_{i_2} \rangle  \langle S_1 I_{j_k} \rangle}{ \langle S_1 \rangle},\\
\dot{ \langle S_1 S_{i_1} \rangle} &= - \tau  \langle S_1S_{i_1}I_{i_2} \rangle-  \tau \sum_{j=1, j \neq i}^M\sum_{k=1}^2\frac{ \langle S_1S_{i_1} \rangle  \langle S_1 I_{j_k} \rangle}
{ \langle S_1 \rangle},\\
\dot{ \langle S_1 S_{i_2} \rangle} &= - \tau  \langle S_1I_{i_1}S_{i_2}  \rangle-  \tau \sum_{j=1, j \neq i}^M\sum_{k=1}^2\frac{ \langle S_1S_{i_2} \rangle  \langle S_1 I_{j_k} \rangle}
{ \langle S_1 \rangle},\\
\dot{ \langle I_1S_{i_1}S_{i_2} \rangle} &= -(2\tau+\gamma) \langle I_1S_{i_1}S_{i_2} \rangle +  \tau \sum_{j=1, j \neq i}^M\sum_{k=1}^2\frac{ \langle S_1S_{i_1}S_{i_2} \rangle \langle 
S_1I_{j_k} \rangle}{ \langle S_1 \rangle}, \\
\dot{ \langle I_1S_{i_1}I_{i_2} \rangle} &= -2(\tau+\gamma) \langle I_1S_{i_1}I_{i_2} \rangle +  \tau\sum_{j=1, j \neq i}^M\sum_{k=1}^2\frac{ \langle S_1S_{i_1}I_{i_2} \rangle \langle 
S_1I_{j_k} \rangle}{ \langle S_1 \rangle} \nonumber \\
& \quad + \tau  \langle I_1S_{i_1}S_{i_2} \rangle + \tau  \langle S_1S_{i_1}I_{i_2} \rangle, \\
\dot{ \langle I_1I_{i_1}S_{i_2} \rangle} &= -2(\tau+\gamma) \langle I_1I_{i_1}S_{i_2} \rangle + \tau \sum_{j=1, j \neq i}^M\sum_{k=1}^2\frac{ \langle S_1I_{i_1}S_{i_2} \rangle \langle 
S_1I_{j_k} \rangle}{ \langle S_1 \rangle} \nonumber \\
& \quad + \tau  \langle I_1S_{i_1}S_{i_2}  \rangle + \tau  \langle S_1I_{i_1}S_{i_2} \rangle, \\
\dot{ \langle S_1 S_{i_1} I_{i_2} \rangle} &= - (2 \tau + \gamma) \langle S_1 S_{i_1} I_{i_2} \rangle - \tau \sum_{j=1, j \neq i}^M\sum_{k=1}^2\frac{ \langle S_1S_{i_1} I_{i_2} \rangle  
\langle S_1 I_{j_k} \rangle}{ \langle S_1  \rangle},\\
\dot{ \langle S_1 I_{i_1} S_{i_2} \rangle} &= - (2 \tau + \gamma) \langle S_1 I_{i_1} S_{i_2} \rangle -  \tau \sum_{j=1, j \neq i}^M\sum_{k=1}^2\frac{ \langle S_1I_{i_1} S_{i_2} \rangle  
\langle S_1 I_{j_k} \rangle}{ \langle S_1  \rangle},\\
\dot{ \langle S_1I_{i_1}I_{i_2} \rangle} &= -2(\tau+\gamma) \langle S_1I_{i_1}I_{i_2} \rangle +  \tau \la S_1S_{i_1}I_{i_2}\ra +\tau \la S_1I_{i_1}S_{i_2}\ra \nonumber \\
& - \tau\sum_{j=1, j \neq i}^M\sum_{k=1}^2\frac{ \langle S_1I_{i_1}I_{i_2} \rangle \langle 
S_1I_{j_k} \rangle}{ \langle S_1 \rangle}, \\
\dot{ \langle S_1 S_{i_1} S_{i_2}  \rangle} &= - \tau \sum_{j=1, j \neq i}^M\sum_{k=1}^2\frac{ \langle S_1S_{i_1} S_{i_2} \rangle  \langle S_1 I_{j_k} \rangle}{ \langle S_1  \rangle},\label{ex2}
\end{align}
where, in Eqs.~(\ref{ex1} -- \ref{ex2}) we have used the following closures:
$$ \langle S_1S_{i_1}S_{i_2}I_{k_j}  \rangle \langle S_1  \rangle=   \langle S_1S_{i_1}S_{i_2} \rangle  \langle S_1I_{k_j} \rangle,$$
$$ \langle S_1S_{i_1}I_{k_j} \rangle \langle S_1  \rangle= \langle S_1S_{i_1} \rangle \langle S_1I_{k_j}\rangle,$$
$$ \langle S_1S_{i_2}I_{k_j}\rangle \langle S_1  \rangle= \langle S_1S_{i_2}\rangle \langle S_1I_{k_j}\rangle,$$
$$\langle S_1S_{i_1}I_{i_2}I_{k_j} \rangle \langle S_1  \rangle=  \langle S_1S_{i_1}I_{i_2}\rangle \langle S_1I_{k_j}\rangle,$$ and
$$\langle S_1I_{i_1}S_{i_2}I_{k_j} \rangle \langle S_1  \rangle=  \langle S_1I_{i_1}S_{i_2}\rangle \langle S_1I_{k_j}\rangle,$$ for $i, k=1, 2, \dots, M$, $i \neq k$, and $j=1, 2$.\\
These closures are of two main type, namely:
\begin{enumerate}
\item closure of a triple which is not a triangle ($i\neq j$):
\begin{equation}
\langle X_{i_l}S_1Y_{j_k} \rangle \langle  S_1 \rangle =\langle X_{i_l}S_1 \rangle \langle S_1Y_{j_k}\rangle
\end{equation}
where $l,k=1,2$, $i,j=1,\dots ,M$ and $X, Y$ are either $S$ or $I$ in some particular combination.
\item closure of a quadruple containing a triangle:
\begin{equation}
\langle X_{i_1}Y_{i_2}S_1Z_{j_k} \rangle \langle S_1 \rangle  =\langle X_{i_1}Y_{i_2}S_1\rangle \langle S_1Z_{j_k}\rangle
\end{equation}
where $k=1,2$, $i,j=1,\dots ,M$ and $X, Y, Z$ are either $S$ or $I$ in some particular combination.
\end{enumerate}

\newpage

%

\begin{thebibliography}{99}
\addcontentsline{toc}{chapter}{Bibliography}

\bibitem{Ball} F. Ball, D. Sirl \& P. Trapman (2010) Analysis of a stochastic SIR epidemic on a random network incorporating household structure.
\textit{Math. Biosci.} \textbf{224} 53Ð73.

\bibitem{LeonReview}
 L. Danon, A. P. Ford, T. House, C. P. Jewell, M. J. Keeling, G. O. Roberts, J. V. Ross \& M. C. Vernon (20122)``Networks and the Epidemiology of Infectious Disease''. \textit{Interdisciplinary Perspectives on Infectious Diseases} \textbf{284909} special issue``Network Perspectives on Infectious Disease Dynamics''.

\bibitem{Diestel}
R. Diestel. Graph theory. Springer-Verlag, Heidelberg, New York, 2005.

\bibitem{GleesonClusteredNetworks} J.P. Gleeson (2009) Bond percolation on a class of clustered random networks.
\textit{Phys. Rev. E.} \textbf{80} 036107.

\bibitem{GreenKissLargeScaleClust}
D.M. Green \& I.Z. Kiss (2010) Large-scale properties of clustered networks: Implications for disease dynamics. \textit{Journal of Biological Dynamics} \textbf{4} 431-445.

\bibitem{HouseUnifyingApp}
T. House \& M.J. Keeling (2011) Insights from unifying modern approximations to infections on networks. \textit{J. Roy. Soc. Interface} \textbf{8} 67-73.

\bibitem{KarrerNewman2010Exact}
B. Karrer \& M.E.J. Newman (2010)
Message passing approach for general epidemic models. \textit{Phys. Rev. E.} \textbf{82} 016101.

\bibitem{KissComNewman}
I.Z. Kiss \& D.M. Green (2008) Comment on ``Properties of highly clustered network''. \textit{Phys. Rev. E} \textbf{78} 048101.

\bibitem{Newman2003ClustNetwGeneration}
M.E.J. Newman (2003) Properties of highly clustered networks. \textit{Phys. Rev. E} \textbf{68} 026121.

\bibitem{Newman2009RanNetwClust}
M.E.J. Newman (2009) Random graphs with clustering. \textit{Phys. Rev. Lett.} 1\textbf{03} 058701.

\bibitem{Sharkey08}
K.J. Sharkey (2008)
Deterministic epidemiological models at the individual level. \textit{J. Math. Biol.} \textbf{57} 311-331.

\bibitem{Sharkey11}
K.J. Sharkey (2011)
Deterministic epidemic models on contact networks: Correlations and unbiological terms. \textit{Theor. Popul. Biol.} \textbf{79} 115-129.

\bibitem{Sharkeyetal13Exact}
K.J. Sharkey, I.Z. Kiss, R.R. Wilkinson \& P.L. Simon (2012) Exact equations for SIR epidemics on unclustered networks. Submitted to \textit{Advances in Applied Probability},
http://arxiv.org/abs/1212.2172.

\bibitem{Sedgewick}
R. Sedgewick. Algorithms in C: graph algorithms. Addison-Wesley, 2002.

\bibitem{SimonKissAut}
P.L. Simon, M. Taylor \& I.Z. Kiss (2011) Exact epidemic models on graphs using graph-automorphism driven lumping. \textit{J. Math. Biol.} \textbf{62} 479-508.

\bibitem{Trapman}
P. Trapman (2007) On analytical approaches to epidemics on networks. \textit{Theor. Popul. Biol.} \textbf{71} 160-173.

\bibitem{TaylorKissJMB}
T.J. Taylor \& I.Z. Kiss (2013) Interdependency and hierarchy of exact and approximate epidemic models on networks. Published online in \textit{J. Math. Biol.} DOI 10.1007/s00285-013-0699-x.

\bibitem{VolzTuneDegreeClust}
E.M. Volz (2004) Random networks with tunable degree distribution and clustering \textit{Phys. Rev. E} \textbf{70} 056115.

\bibitem{VolzPlosOne}
E.M. Volz, J.C. Miller, A. Galvani \& L.A. Meyers (2011)
Effects of heterogeneous and clustered contact patterns on infectious
disease dynamics. \textit{PLoS Comput. Biol.} \textbf{7} e1002042.

%
%
%
%
%
%
%
%
%
%
%
%
%
%
%
%
%
%
%
%
%
%
%
%
%
%
%
%
%
%
%
%
%
%

\end{thebibliography}
\end{document}